\documentclass[10pt]{article}
\usepackage{amsmath,amscd}
\usepackage{amssymb,latexsym,amsthm}
\usepackage{color}
\usepackage[spanish,english]{babel}
\usepackage[pdftex]{hyperref}
\usepackage{amssymb}
\usepackage{color}
\usepackage{amsmath,amsthm,amscd}
\usepackage[latin1]{inputenc}
\usepackage{lscape}
\usepackage{fancyhdr}
\usepackage{amsfonts}
\usepackage{pb-diagram}

\numberwithin{equation}{section}
\newtheorem{theorem}{Theorem}[section]

\newtheorem{definition}[theorem]{Definition}
\newtheorem{proposition}[theorem]{Proposition}
\newtheorem{lemma}[theorem]{Lemma}

\newtheorem{corollary}[theorem]{Corollary}

\theoremstyle{definition}

\newtheorem{example}[theorem]{Example}

\newtheorem{problem}[theorem]{Problem}
\newtheorem{remark}[theorem]{Remark}

\hoffset-0.3in \voffset-1.3cm \setlength{\oddsidemargin}{9mm}
\setlength{\textheight}{21.3cm}\setlength{\textwidth}{16cm}
\deactivatequoting

\title{\textbf{Algebraic sets, ideals of points and the Hilbert's Nullstellensatz theorem for skew $PBW$ extensions}}
\author{Oswaldo Lezama\\
\texttt{jolezamas@unal.edu.co}
\\ Seminario de Álgebra Constructiva - SAC$^2$\\ Departamento de Matemáticas\\ Universidad Nacional de
Colombia, Sede Bogot\'a}
\date{}
\begin{document}
\maketitle
\begin{abstract}
	
\noindent In this paper we define the algebraic sets and the ideal of points for bijective skew $PBW$ extensions with coefficients in left Noetherian domains. Some properties of affine algebraic sets of commutative algebraic geometry will be extended, in particular, a Zariski topology will be constructed. Assuming additionally that the extension is quasi-commutative with polynomial center and the ring of coefficients is an algebraically closed field, we will prove an adapted version of the Hilbert's Nullstellensatz theorem that covers the classical one. The Gröbner bases of skew $PBW$ extensions will be used for defining the algebraic sets and for proving the main theorem. Many key algebras and rings coming from mathematical physics and non-commutative algebraic geometry are skew $PBW$ extensions.   
	
\tableofcontents
\noindent 

\bigskip

\noindent \textit{Key words and phrases.} Non-commutative algebraic geometry, algebraic sets, ideals of points, Hilbert's Nullstellensatz theorem, non-commutative Gröbner bases, skew $PBW$ extensions.

\bigskip

\noindent 2020 \textit{Mathematics Subject Classification.} Primary: 16S38. Secondary: 16N40, 16S36, 16Z05.
\end{abstract}

\section{Introduction}

The ring-theoretic version of the Hilbert's Nullstellensatz theorem for skew $PBW$ extensions has been considered in the beautiful paper \cite{reyes-jason}, following the approach given in \cite{McConnell2}. The algebraic characterization of the theorem presented by the authors of \cite{reyes-jason} does not use the notion of variety (Theorem 3.1, \cite{reyes-jason}). Applying the algebraic sets and the ideal of points introduced in Definition \ref{definition4.14} and Theorem \ref{theorem3.4}, we present in this paper the classical version of this important theorem for quasi-commutative bijective skew $PBW$ extensions of algebraically closed fields. Our version covers the Nullstellensatz theorem of commutative algebraic geometry (see \cite{Fulton}, Chapter 1).

The novelty of the paper is concentrated in the third section, more exactly, the new results are Definition \ref{definition4.14} (inspired in the notion of algebraic set given in Section 5 of \cite{Luerssen}), Theorem \ref{theorem3.4}, Corollary \ref{corollary4.5}, Lemma \ref{lemma4.15} and Theorem \ref{Nullstellensatz} (main theorem). Some examples that illustrate the main theorem are presented in Example \ref{example3.13}.  

For completeness, we conclude this introductory section recalling some notions and results related to prime ideals of an arbitrary ring that we will need for the proof of Lemma \ref{lemma4.15} (see \cite{McConnell} and also \cite{Birkenmeier}, Definition 3). In this paper ring means associative ring with unit.

\begin{definition}
	Let $S$ be a ring and $I,P$ be two-sided ideals of $S$, with $P\neq S$.
	\begin{enumerate}
		\item[\rm (i)]
		$P$ is a \textbf{prime ideal} of $S$ if for any left ideals $L,J$ of $S$ the following condition holds:
		$LJ\subseteq P$ if and only if $L\subseteq P$ or $J\subseteq P$.
		\item[\rm (ii)]The \textbf{radical} of $I$, denoted $\sqrt{I}$, is the intersection of all prime ideals of $S$ containing $I$.
		\item[\rm (iii)]An element $a\in S$ is \textbf{$I$-strongly nilpotent} if for any given sequence $\mathcal{S}:=\{a_i\}_{i\geq 1}$ of elements of $S$, with $a_1:=a$ and $a_{i+1}\in a_iSa_i$, there exists $m(\mathcal{S})\geq 1$ such that $a_{m(\mathcal{S})}\in I$. We say that $a$ is \textbf{$I$-nilpotent} if there exists $m\geq 1$ such that $a^m\in I$.
		\item[\rm (iv)]$P$ is \textbf{completely prime} if the following condition holds for any $a,b\in S$: $ab\in P$ if and only if $a\in P$ or $b\in P$.  
		\item[\rm (v)]$P$ is \textbf{completely semiprime} if the following condition holds for any $a\in S$: $a^2\in P$ if and only if $a\in P$. 
	\end{enumerate}
\end{definition}
It is clear that if $a\in S$ is $I$-strongly nilpotent, then $a$ is $I$-nilpotent. If $a\in Z(S)$, then the converse is true. Observe that any element $a\in S$ is $S$-strongly nilpotent and $\sqrt{S}:=S$. If $P$ is completely prime, then $P$ is completely semiprime. By induction on $m$ it is easy to show that $P$ is completely semiprime if and only if the following condition holds: For any $a\in S$ and any $m\geq 1$,  $a^m\in P$ if and only if $a\in P$. 

\begin{proposition}\label{proposition4.14}
	Let $S$ be a ring and $P$ be a proper two-sided ideal of $S$. $P$ is a prime ideal if and only if the following condition holds for any elements $a,b\in S$: $aSb\subseteq P$ if and only if $a\in P$ or $b\in P$.
\end{proposition}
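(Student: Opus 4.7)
The plan is to prove the equivalence in two parts, observing first that in the element-wise condition the implication ``$a\in P$ or $b\in P\Rightarrow aSb\subseteq P$'' is immediate because $P$ is a two-sided ideal, so the real content is the reverse implication.

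For the forward direction, assume $P$ is prime in the sense of the definition, and suppose $a,b\in S$ satisfy $aSb\subseteq P$. Since $S$ has a unit, the sets $L:=Sa$ and $J:=Sb$ are left ideals containing $a$ and $b$, respectively. A typical product of generators gives $(s_1a)(s_2b)=s_1(as_2b)\in SP\subseteq P$, so $LJ\subseteq P$. The hypothesis forces $L\subseteq P$ or $J\subseteq P$, and since $a\in L$ and $b\in J$ we conclude $a\in P$ or $b\in P$.

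For the backward direction, assume the element-wise condition and let $L,J$ be left ideals of $S$ with $LJ\subseteq P$. If $L\not\subseteq P$, pick $a\in L\setminus P$. For any $b\in J$, the set $Sb$ is contained in $J$ because $J$ is a left ideal, so $aSb\subseteq a\cdot J\subseteq LJ\subseteq P$. The element-wise condition then gives $a\in P$ or $b\in P$; since $a\notin P$, we obtain $b\in P$. As $b\in J$ was arbitrary, $J\subseteq P$, which is what was needed.

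There is no real obstacle here; the only point requiring care is keeping track of left-ideal versus two-sided structure, in particular checking that $aSb\subseteq LJ$ uses only that $J$ is a left ideal (so that $Sb\subseteq J$) and that conversely $SaSb\subseteq P$ uses only that $P$ is two-sided (so that $SP\subseteq P$). The availability of the unit $1\in S$ is what makes $Sa$ and $Sb$ contain $a$ and $b$, and hence makes the passage between elements and principal left ideals seamless.
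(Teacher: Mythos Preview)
Your argument is correct and is exactly the standard one. Note, however, that the paper does not actually supply a proof of this proposition: it is stated without proof as a well-known characterization of prime ideals (the reference to \cite{McConnell} covers it), so there is nothing in the paper to compare your approach against beyond confirming that your proof is the expected one.
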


\begin{proposition}\label{proposition4.15}
	Let $S$ be a ring and $P$ be a proper two-sided ideal of $S$. $P$ is a prime ideal if and only if the following condition holds for any left ideals $L,J$ of $S$: If $P\subsetneq L$ and  $P\subsetneq J$, then $LJ\nsubseteq P$.
\end{proposition}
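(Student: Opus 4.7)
The plan is to prove this as essentially a contrapositive reformulation of the definition of prime ideal stated just before Proposition \ref{proposition4.14}, taking advantage of the fact that $P$ is two-sided so that adding $P$ to a left ideal leaves the product modulo $P$ unchanged.

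For the forward implication, I would assume $P$ is prime and take left ideals $L, J$ with $P \subsetneq L$ and $P \subsetneq J$. The strict containments immediately give $L \nsubseteq P$ and $J \nsubseteq P$ (else $L = P$ or $J = P$), so the contrapositive of the defining condition of a prime ideal yields $LJ \nsubseteq P$.

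For the reverse implication, I would verify the defining biconditional of a prime ideal. One direction is automatic: if $L \subseteq P$ then $LJ \subseteq PJ \subseteq P$ because $P$ is a right ideal, and similarly if $J \subseteq P$. For the nontrivial direction, assume $LJ \subseteq P$ and suppose for contradiction that $L \nsubseteq P$ and $J \nsubseteq P$. Replace $L$ and $J$ by the left ideals $L + P$ and $J + P$; both strictly contain $P$ (since $L$ and $J$ each contribute an element outside $P$). Expanding,
\[
(L+P)(J+P) \subseteq LJ + LP + PJ + PP \subseteq LJ + P \subseteq P,
\]
where $LP, PJ, PP \subseteq P$ follows from $P$ being two-sided. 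This contradicts the hypothesis applied to $L+P$ and $J+P$.

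There is no real obstacle here; the only point that requires any care is the $L \mapsto L + P$ trick, which is needed because the hypothesis is phrased in terms of left ideals that strictly contain $P$ rather than arbitrary left ideals not contained in $P$. Once that substitution is made, the two-sidedness of $P$ absorbs the extra cross terms and the contradiction is immediate.
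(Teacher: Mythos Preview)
Your argument is correct. The paper itself does not supply a proof of this proposition; it is stated there as a standard fact (with a reference to \cite{McConnell}) and used later in the proof of the characterization of $\sqrt{I}$. Your write-up fills that gap cleanly, and the only nontrivial move---replacing $L,J$ by $L+P,J+P$ so as to land in the setting of the hypothesis and then absorbing the cross terms via the two-sidedness of $P$---is exactly what is needed.
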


\begin{proposition}
	Let $S$ be a ring and $I$ be a two-sided ideal of $S$. Then,
	\begin{center}
		$\sqrt{I}=\{a\in S\mid a\ \text{is $I$-strongly nilpotent}\}$.
	\end{center}
\end{proposition}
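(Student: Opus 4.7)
The plan is to prove the two inclusions separately. The statement is the non-commutative analogue of the classical characterisation of the nilradical, and Propositions \ref{proposition4.14} and \ref{proposition4.15} stated just above are exactly the tools we need.

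For the inclusion $\{a \in S \mid a \text{ is } I\text{-strongly nilpotent}\} \subseteq \sqrt{I}$, I would fix a prime ideal $P$ containing $I$ and an $I$-strongly nilpotent element $a$, and argue by contradiction that $a \in P$. If $a \notin P$, I recursively build a sequence $\mathcal{S}=\{a_i\}$ with $a_1:=a$ and $a_{i+1}\in a_iSa_i\setminus P$: given $a_i\notin P$, Proposition \ref{proposition4.14} applied contrapositively (with $b=a_i$) yields $a_iSa_i\nsubseteq P$, so one can pick $a_{i+1}\in a_iSa_i$ outside $P$. Since $I\subseteq P$, no term of this sequence lies in $I$, contradicting the $I$-strong nilpotence of $a$.

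The reverse inclusion is the substantive one. Suppose $a$ is not $I$-strongly nilpotent; then there exists a witness sequence $\mathcal{S}=\{a_i\}$ with $a_1=a$, $a_{i+1}\in a_iSa_i$, and $a_i\notin I$ for every $i$. Consider the family of two-sided ideals of $S$ containing $I$ but disjoint from $\mathcal{S}$; it is nonempty (it contains $I$) and closed under unions of chains, so Zorn's lemma gives a maximal such ideal $P$. By construction $I\subseteq P$ and $a=a_1\notin P$, so it only remains to check that $P$ is prime, which will force $a\notin\sqrt{I}$. A key preliminary observation, proved by an easy induction on $n-i$, is that $a_n\in a_iSa_i$ whenever $n>i$; in particular $a_n$ lies in any two-sided ideal that contains $a_i$.

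To verify primality I use the criterion of Proposition \ref{proposition4.14}: assume, for contradiction, that there exist $x,y\in S\setminus P$ with $xSy\subseteq P$. The two-sided ideals $P+(x)$ and $P+(y)$ strictly contain $P$, so by maximality there exist indices $i,j$ with $a_i\in P+(x)$ and $a_j\in P+(y)$. Pick $k>\max(i,j)$; the preliminary observation gives $a_k\in (P+(x))S(P+(x))\subseteq P+(x)$ and, simultaneously, $a_k\in P+(y)$. Writing $a_k=p+u=q+v$ with $p,q\in P$, $u\in (x)$, $v\in (y)$, and expanding $a_{k+1}=a_ks\,a_k=(p+u)s(q+v)$, the three mixed summands involving $p$ or $q$ belong to $P$ because $P$ is two-sided, while the remaining term $usv$ is a sum of elements of the form $r(x\cdot s'\cdot y)r'$ with $x s' y\in xSy\subseteq P$, so $usv\in P$ as well. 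Thus $a_{k+1}\in P$, contradicting the defining property of $P$.

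The delicate step is the last one: one must observe that each $a_n$ (for $n>i$) actually lies in $a_iSa_i$ (not merely in the two-sided ideal generated by $a_i$), so that the same element $a_k$ can be simultaneously controlled modulo $P+(x)$ and modulo $P+(y)$, and then exploit the identity $xSy\subseteq P$ to kill the otherwise problematic cross-term $usv$. Once this is in place, the standard Zorn--plus--primality argument closes the proof.
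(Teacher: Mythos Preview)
Your proof is correct and follows the same strategy as the paper's: for one inclusion build a $P$-avoiding sequence from any $a\notin P$, and for the other take $P$ maximal (via Zorn) among two-sided ideals containing $I$ and disjoint from the witness sequence $\mathcal{S}$, then check primality. The only difference is in that last check: the paper invokes Proposition~\ref{proposition4.15} (given left ideals $L,J\supsetneq P$ it picks $a_i\in L$, $a_j\in J$, sets $k:=\max\{i,j\}$ so that $a_k\in L\cap J$, and notes $a_{k+1}\in a_kSa_k\subseteq L(Sa_k)\subseteq LJ$), whereas you use the element criterion of Proposition~\ref{proposition4.14} with the two-sided ideals $P+(x)$, $P+(y)$ and handle the cross-term $usv$ explicitly---your route meshes more directly with the fact that maximality of $P$ was taken among two-sided ideals.
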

\begin{proof}
	Let $a\in S$ such that $a\notin \sqrt{I}$, then there exists a prime ideal $P$ of $S$, containing $I$, such that $a\notin P$, hence, by Proposition \ref{proposition4.14},  $aSa\nsubseteq P$. This says that there exists $b\in S$ such that $aba\notin P$. Let $a_1:=a$ and $a_2:=aba$. Thus, $a_2Sa_2\nsubseteq P$ and hence there exists $c\in S$ such that $a_2ca_2\notin P$. Let $a_3:=a_2ca_2$. Continuing this way we get a sequence $\{a_i\}_{i\geq 1}$ of elements of $S$ such that $a_i\notin P$ for every $i\geq 1$, hence, $a_i\notin I$ for every $i\geq 1$. This means that $a$ is not $I$-strongly nilpotent.
	
	Conversely, assume that $a\in S$ is not $I$-strongly nilpotent, then there exists a sequence $\mathcal{S}:=\{a_i\}_{i\geq 1}$ of elements of $S$, with $a_1:=a$ and $a_{i+1}\in a_iSa_i$, such that for every $i\geq 1$, $a_i\notin I$. By Zorn's lemma, there exists a two-sided ideal $P$ of $S$, containing $I$, maximal with respect to the condition $\mathcal{S}\cap P=\emptyset$ (observe that $I\supseteq I$ and $\mathcal{S}\cap I=\emptyset$). We will show that $P$ is a prime ideal of $S$. It is clear that $P\neq S$. Let $L,J$ be left ideals of $S$ such that $P\subsetneq L$ and $P\subsetneq J$. Then, $L\cap \mathcal{S}\neq \emptyset$ and $J\cap \mathcal{S}\neq \emptyset$, so there exists $a_i\in L$ and $a_j\in J$. Let $k:=\max\{i,j\}$, then $a_{k+1}\in LJ$, but $a_{k+1}\notin P$, i.e., $LJ\nsubseteq P$. Thus, from Proposition \ref{proposition4.15}, $P\supseteq I$ is a prime ideal such that $a\notin P$, so $a\notin \sqrt{I}$. 
\end{proof}

\section{Skew $PBW$ extensions}

In this section we recall some basic facts about the class of noncommutative rings of polynomial type known as skew $PBW$ extensions. In particular, we include the ingredients of the Gröbner theory of skew $PBW$ extensions needed in the next section. For more details see \cite{Lezama-sigmaPBW}, Chapters 1, 2, 3, 13.

\begin{definition}[\cite{LezamaGallego},\cite{Lezama-sigmaPBW}]\label{gpbwextension}
	Let $R$ and $A$ be rings. We say that $A$ is a \textit{\textbf{skew $PBW$
			extension of $R$}} $($also called a $\sigma-PBW$ extension of
	$R$$)$ if the following conditions hold:
	\begin{enumerate}
		\item[\rm (i)]$R\subseteq A$.
		\item[\rm (ii)]There exist finitely many elements $x_1,\dots ,x_n\in A$ such $A$ is an $R$-free left module with basis
		\begin{center}
			${\rm Mon}(A):= \{x^{\alpha}=x_1^{\alpha_1}\cdots
			x_n^{\alpha_n}\mid \alpha=(\alpha_1,\dots ,\alpha_n)\in
			\mathbb{N}^n\}$, with $\mathbb{N}:=\{0,1,2,\dots\}$.
		\end{center}
		In this case we say that $A$ is a \textbf{ring of left polynomial type} over $R$ with respect to
		$\{x_1,\dots,x_n\}$. The set ${\rm Mon}(A)$ is called the set of \textbf{standard monomials} of
		$A$.
		\item[\rm (iii)]For every $1\leq i\leq n$ and $r\in R-\{0\}$ there exists $c_{i,r}\in R-\{0\}$ such that
		\begin{equation}\label{sigmadefinicion1}
		x_ir-c_{i,r}x_i\in R.
		\end{equation}
		\item[\rm (iv)]For every $1\leq i,j\leq n$ there exists $c_{i,j}\in R-\{0\}$ such that
		\begin{equation}\label{sigmadefinicion2}
		x_jx_i-c_{i,j}x_ix_j\in R+Rx_1+\cdots +Rx_n.
		\end{equation}
		Under these conditions we will write $A:=\sigma(R)\langle
		x_1,\dots ,x_n\rangle$.
	\end{enumerate}
\end{definition}
Associated to a skew $PBW$ extension $A=\sigma(R)\langle x_1,\dots
,x_n\rangle$ there are $n$ injective endomorphisms
$\sigma_1,\dots,\sigma_n$ of $R$ and $\sigma_i$-derivations, as
the following proposition shows.

\begin{proposition}[\cite{LezamaGallego}, Proposition 3]\label{sigmadefinition}
	Let $A$ be a skew $PBW$ extension of $R$. Then, for every $1\leq
	i\leq n$, there exist an injective ring endomorphism
	$\sigma_i:R\rightarrow R$ and a $\sigma_i$-derivation
	$\delta_i:R\rightarrow R$ such that
	\begin{center}
		$x_ir=\sigma_i(r)x_i+\delta_i(r)$,
	\end{center}
	for each $r\in R$.
\end{proposition}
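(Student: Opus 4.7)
The plan is to read off $\sigma_i$ and $\delta_i$ directly from condition (iii) of Definition \ref{gpbwextension}, invoking the uniqueness of coefficients in the expansion of elements of $A$ with respect to the free basis $\mathrm{Mon}(A)$. For $r\in R-\{0\}$, condition (iii) provides $c_{i,r}\in R-\{0\}$ and an element $d_{i,r}\in R$ with $x_ir = c_{i,r}x_i + d_{i,r}$; since $\{1,x_i\}$ is part of the $R$-free basis $\mathrm{Mon}(A)$, the pair $(c_{i,r},d_{i,r})$ is uniquely determined by $r$. I set $\sigma_i(r):=c_{i,r}$ and $\delta_i(r):=d_{i,r}$ for $r\neq 0$, and $\sigma_i(0):=0$, $\delta_i(0):=0$, so that the identity $x_ir = \sigma_i(r)x_i + \delta_i(r)$ holds for every $r\in R$.

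I then verify that $\sigma_i$ is a unital ring homomorphism and $\delta_i$ a $\sigma_i$-derivation using the same uniqueness principle. For additivity, compare the two expansions
\[
\sigma_i(r+s)x_i + \delta_i(r+s) = x_i(r+s) = x_ir + x_is = \bigl(\sigma_i(r)+\sigma_i(s)\bigr)x_i + \bigl(\delta_i(r)+\delta_i(s)\bigr),
\]
and read off the equalities of coefficients. For multiplicativity, associativity of multiplication in $A$ yields
\[
x_i(rs) = (x_ir)s = \sigma_i(r)\sigma_i(s)x_i + \sigma_i(r)\delta_i(s) + \delta_i(r)s,
\]
which, compared to $x_i(rs) = \sigma_i(rs)x_i + \delta_i(rs)$, forces $\sigma_i(rs) = \sigma_i(r)\sigma_i(s)$ and the $\sigma_i$-derivation rule $\delta_i(rs) = \sigma_i(r)\delta_i(s) + \delta_i(r)s$. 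The trivial identity $x_i\cdot 1 = x_i$ supplies $\sigma_i(1) = 1$ and $\delta_i(1) = 0$.

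Injectivity of $\sigma_i$ is immediate from condition (iii): for $r\neq 0$ the coefficient $c_{i,r}$ is taken in $R-\{0\}$, so $\sigma_i(r)\neq 0$. The only slightly delicate point, and the main (albeit minor) obstacle, is that condition (iii) is stated only for $r\neq 0$; this forces me to handle the case $r=0$ by a separate convention, and to confirm that the identity $x_ir = \sigma_i(r)x_i + \delta_i(r)$ together with the verifications above remains consistent when the value $0$ is substituted anywhere. The uniqueness granted by the freeness of $\mathrm{Mon}(A)$ over $R$ reduces this final consistency check to routine bookkeeping.
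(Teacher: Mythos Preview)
Your argument is correct and is precisely the standard derivation of $\sigma_i$ and $\delta_i$ from condition (iii) of Definition~\ref{gpbwextension}, using the freeness of $\mathrm{Mon}(A)$ to guarantee uniqueness of coefficients. Note that the paper itself does not supply a proof of this proposition; it merely cites \cite{LezamaGallego}, Proposition~3, where exactly this argument appears, so there is nothing further to compare.
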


Two remarkable particular cases of skew $PBW$ extensions are recalled next. 

\begin{definition}[\cite{Lezama-sigmaPBW}, Chapter 1]\label{sigmapbwderivationtype}
	Let $A$ be a skew $PBW$ extension.
	\begin{enumerate}
		\item[\rm (a)]
		$A$ is \textbf{quasi-commutative}\index{quasi-commutative} if conditions {\rm(}iii{\rm)} and {\rm(}iv{\rm)} in Definition
		\ref{gpbwextension} are replaced by
		\begin{enumerate}
			\item[\rm ($iii'$)]For every $1\leq i\leq n$ and $r\in R-\{0\}$ there exists a $c_{i,r}\in R-\{0\}$ such that
			\begin{equation}
			x_ir=c_{i,r}x_i.
			\end{equation}
			\item[\rm ($iv'$)]For every $1\leq i,j\leq n$ there exists $c_{i,j}\in R-\{0\}$ such that
			\begin{equation}
			x_jx_i=c_{i,j}x_ix_j.
			\end{equation}
		\end{enumerate}
		\item[\rm (b)]$A$ is \textbf{bijective}\index{bijective} if $\sigma_i$ is bijective for
		every $1\leq i\leq n$ and $c_{i,j}$ is invertible for any $1\leq i,j\leq n$.
	\end{enumerate}
\end{definition}

If $A=\sigma(R)\langle x_1,\dots,x_n\rangle$ is a skew $PBW$
extension of the ring $R$, then, as was observed in Proposition
\ref{sigmadefinition}, $A$ induces injective endomorphisms
$\sigma_k:R\to R$ and $\sigma_k$-derivations $\delta_k:R\to R$,
$1\leq k\leq n$. Moreover, from the Definition
\ref{gpbwextension}, there exists a unique finite set of constants
$c_{ij}, d_{ij}, a_{ij}^{(k)}\in R$, $c_{ij}\neq 0$, such that
\begin{equation}\label{equation1.2.1}
x_jx_i=c_{ij}x_ix_j+a_{ij}^{(1)}x_1+\cdots+a_{ij}^{(n)}x_n+d_{ij},
\ \text{for every}\  1\leq i<j\leq n.
\end{equation}
If $A$ is quasi-commutative, then $\delta_k=0$ for every $1\leq k\leq n$ and $p_{\alpha,r},p_{\alpha, \beta}=0$ in Proposition \ref{coefficientes}.

Many important algebras and rings coming from mathematical physics and non-commutative algebraic geometry
are particular examples of skew $PBW$ extensions: \textbf{Habitual ring of
	polynomials in several variables}, Weyl algebras, enveloping
algebras of finite dimensional Lie algebras, algebra of
$q$-differential operators, many important types of Ore algebras, in particular, single Ore extensions, 
algebras of diffusion type, additive and multiplicative analogues
of the Weyl algebra, dispin algebra $\mathcal{U}(osp(1,2))$,
quantum algebra $\mathcal{U}'(so(3,K))$, Woronowicz algebra
$\mathcal{W}_{\nu}(\mathfrak{sl}(2,K))$, Manin algebra
$\mathcal{O}_q(M_2(K))$, coordinate algebra of the quantum group
$SL_q(2)$, $q$-Heisenberg algebra \textbf{H}$_n(q)$, Hayashi
algebra $W_q(J)$, differential operators on a quantum space
$D_{\textbf{q}}(S_{\textbf{q}})$, Witten's deformation of
$\mathcal{U}(\mathfrak{sl}(2,K))$, multiparameter Weyl algebra
$A_n^{Q,\Gamma}(K)$, quantum symplectic space
$\mathcal{O}_q(\mathfrak{sp}(K^{2n}))$, some quadratic algebras in
3 variables, some 3-dimensional skew polynomial algebras,
particular types of Sklyanin algebras, homogenized enveloping
algebra $\mathcal{A}(\mathcal{G})$, Sridharan enveloping algebra
of 3-dimensional Lie algebra $\mathcal{G}$, among many others. For
a precise definition of any of these rings and algebras see 
\cite{lezamareyes1} and \cite{Lezama-sigmaPBW}. The skew $PBW$ has been intensively studied in the last years (see \cite{Lezama-sigmaPBW}).

Next we will fix some notation and a monomial order in $A$ (see \cite{Lezama-sigmaPBW}, Chapter 1). 

\begin{definition}\label{1.1.6}
	Let $A$ be a skew $PBW$ extension of $R$ with endomorphisms $\sigma_i$ as in Proposition
	\ref{sigmadefinition}, $1\leq i\leq n$.
	\begin{enumerate}
		\item[\rm (i)]For $\alpha=(\alpha_1,\dots,\alpha_n)\in \mathbb{N}^n$,
		$\boldsymbol{\sigma^{\alpha}}:=\sigma_1^{\alpha_1}\cdots \sigma_n^{\alpha_n}$,
		$\boldsymbol{|\alpha|}:=\alpha_1+\cdots+\alpha_n$. If $\beta=(\beta_1,\dots,\beta_n)\in \mathbb{N}^n$, then
		$\boldsymbol{\alpha+\beta}:=(\alpha_1+\beta_1,\dots,\alpha_n+\beta_n)$.
		\item[\rm (ii)]For $X=x^{\alpha}\in \mathrm{Mon}(A)$,
		$\boldsymbol{\exp(X)}:=\alpha$ and $\boldsymbol{\deg(X)}:=|\alpha|$.
		\item[\rm (iii)]Let $0\neq f\in A$. If $t(f)$ is the finite
		set of terms that conform $f$, i.e., if $f=c_1X_1+\cdots +c_tX_t$, with $X_i\in \mathrm{Mon}(A)$ and $c_i\in
		R-\{0\}$, then $\boldsymbol{t(f)}:=\{c_1X_1,\dots,c_tX_t\}$.
		\item[\rm (iv)]Let $f$ be as in {\rm(iii)}, then $\boldsymbol{\deg(f)}:=\max\{\deg(X_i)\}_{i=1}^t.$
	\end{enumerate}
\end{definition}

In $\mathrm{Mon}(A)$ we define
\begin{center}
	$x^{\alpha}\succeq x^{\beta}\Longleftrightarrow
	\begin{cases}
	x^{\alpha}=x^{\beta}\\
	\text{or} & \\
	x^{\alpha}\neq x^{\beta}\, \text{but} \, |\alpha|> |\beta| & \\
	\text{or} & \\
	x^{\alpha}\neq x^{\beta},|\alpha|=|\beta|\, \text{but $\exists$ $i$ with} &
	\alpha_1=\beta_1,\dots,\alpha_{i-1}=\beta_{i-1},\alpha_i>\beta_i.
	\end{cases}$
\end{center}
It is clear that this is a total order on $\mathrm{Mon}(A)$, called \textit{\textbf{deglex}} order. If
$x^{\alpha}\succeq x^{\beta}$ but $x^{\alpha}\neq x^{\beta}$, we write $x^{\alpha}\succ x^{\beta}$.
Each element $f\in A-\{0\}$ can be represented in a unique way as $f=c_1x^{\alpha_1}+\cdots
+c_tx^{\alpha_t}$, with $c_i\in R-\{0\}$, $1\leq i\leq t$, and $x^{\alpha_1}\succ \cdots \succ
x^{\alpha_t}$. We say that $x^{\alpha_1}$ is the \textit{\textbf{leading monomial}} of $f$ and we write
$lm(f):=x^{\alpha_1}$; $c_1$ is the \textit{\textbf{leading coefficient}} of $f$, $lc(f):=c_1$, and
$c_1x^{\alpha_1}$ is the \textit{\textbf{leading term}} of $f$ denoted by $lt(f):=c_1x^{\alpha_1}$. We say that $f$ is \textit{\textbf{monic}} if $lc(f):=1$. If $f=0$,
we define $lm(0):=0$, $lc(0):=0$, $lt(0):=0$, and we set $X\succ 0$ for any $X\in \mathrm{Mon}(A)$. We observe that
\begin{center}
	$x^{\alpha}\succ x^{\beta}\Rightarrow lm(x^{\gamma}x^{\alpha}x^{\lambda})\succ
	lm(x^{\gamma}x^{\beta}x^{\lambda})$, for every $x^{\gamma},x^{\lambda}\in \mathrm{Mon}(A)$.
\end{center}

The next proposition complements Definition \ref{gpbwextension}.

\begin{proposition}[\cite{LezamaGallego},\cite{Lezama-sigmaPBW}]\label{coefficientes}
	Let $A$ be a ring of a left polynomial type over $R$ w.r.t.\ $\{x_1,\dots,x_n\}$. $A$ is a skew
	$PBW$ extension of $R$ if and only if the following conditions hold:
	\begin{enumerate}
		\item[\rm (a)]For every $x^{\alpha}\in \mathrm{Mon}(A)$ and every $0\neq
		r\in R$ there exist unique elements $r_{\alpha}:=\sigma^{\alpha}(r)\in R-\{0\}$ and $p_{\alpha
			,r}\in A$ such that
		\begin{equation}\label{611}
		x^{\alpha}r=r_{\alpha}x^{\alpha}+p_{\alpha , r},
		\end{equation}
		where $p_{\alpha ,r}=0$ or $\deg(p_{\alpha ,r})<|\alpha|$ if $p_{\alpha , r}\neq 0$. Moreover, if
		$r$ is left invertible, then $r_\alpha$ is left invertible.
		
		\item[\rm (b)]For every $x^{\alpha},x^{\beta}\in \mathrm{Mon}(A)$ there
		exist unique elements $c_{\alpha,\beta}\in R$ and $p_{\alpha,\beta}\in A$ such that
		\begin{equation}\label{612}
		x^{\alpha}x^{\beta}=c_{\alpha,\beta}x^{\alpha+\beta}+p_{\alpha,\beta},
		\end{equation}
		where $c_{\alpha,\beta}$ is left invertible, $p_{\alpha,\beta}=0$ or
		$\deg(p_{\alpha,\beta})<|\alpha+\beta|$ if $p_{\alpha,\beta}\neq 0$.
	\end{enumerate}
\end{proposition}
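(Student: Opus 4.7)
The proof divides into the two implications, with uniqueness in (a) and (b) falling out for free once existence is established, since $\mathrm{Mon}(A)$ is by hypothesis a free left $R$-basis of $A$ (condition (ii) of Definition \ref{gpbwextension}).

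For the forward direction "skew $PBW$ $\Rightarrow$ (a)", I would argue by induction on $|\alpha|$. The base case $|\alpha|=0$ is trivial, and $|\alpha|=1$ is Proposition \ref{sigmadefinition}: $x_i r = \sigma_i(r)x_i + \delta_i(r)$, where $\sigma_i(r)\in R-\{0\}$ by injectivity of $\sigma_i$ and $\delta_i(r)\in R$ has degree less than $1$. For the inductive step, let $i$ be the smallest index with $\alpha_i>0$ and write $x^\alpha = x_i\, x^{\alpha-e_i}$. By the induction hypothesis, $x^{\alpha-e_i} r = \sigma^{\alpha-e_i}(r)x^{\alpha-e_i}+p_{\alpha-e_i,r}$ with $p_{\alpha-e_i,r}$ of degree $<|\alpha|-1$. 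Multiplying by $x_i$ on the left and applying Proposition \ref{sigmadefinition} gives
\[
x^\alpha r = \sigma_i(\sigma^{\alpha-e_i}(r))\,x^\alpha \;+\; \delta_i(\sigma^{\alpha-e_i}(r))\,x^{\alpha-e_i} \;+\; x_i\, p_{\alpha-e_i,r}.
\]
Because $i$ is minimal with $\alpha_i>0$, we have $\sigma_i\circ\sigma^{\alpha-e_i}=\sigma^\alpha$, so the leading coefficient is $\sigma^\alpha(r)$, nonzero by injectivity. The remaining summands lie in $R$-linear combinations of monomials of degree $\le |\alpha|-1$ (condition (iv) controls the degree increase when left-multiplying by $x_i$), giving $\deg(p_{\alpha,r})<|\alpha|$. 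Left invertibility of $\sigma^\alpha(r)$ follows since each $\sigma_k$ is a unital ring homomorphism.

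For (b), I would again induct, now on $|\alpha|$ (or, equivalently, on $|\alpha|+|\beta|$), keeping $\beta$ arbitrary. The base case $|\alpha|=0$ gives $c_{0,\beta}=1$, $p_{0,\beta}=0$. In the inductive step, decompose $x^\alpha = x_i\, x^{\alpha-e_i}$ as above and apply the hypothesis to $x^{\alpha-e_i}x^\beta$; then use part (a) (already proved) to commute $x_i$ past $c_{\alpha-e_i,\beta}$, and use condition (iv) of Definition \ref{gpbwextension} iteratively to reorder $x_i$ with the variables in $x^{\alpha-e_i+\beta}$ to reach standard form. The leading term is then $\sigma_i(c_{\alpha-e_i,\beta})\,(\text{product of }c_{k,l}\text{-factors})\,x^{\alpha+\beta}$; each factor is left invertible by the bijectivity/invertibility inherent to the computation (or directly because the decomposition of $x_i x^{\alpha-e_i+\beta}$ in the basis has a standard top term with coefficient that is a product of $c_{k,l}$'s), so $c_{\alpha,\beta}$ is left invertible.

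The converse is comparatively short: condition (i) and (ii) are by hypothesis. Specializing (a) to $\alpha=e_i$ yields $x_i r = \sigma_i(r)x_i + p_{e_i,r}$ with $p_{e_i,r}\in R$, so $c_{i,r}:=\sigma_i(r)$ witnesses (iii). Specializing (b) to $\alpha=e_i$, $\beta=e_j$ yields $x_i x_j = c_{e_i,e_j}\, x^{e_i+e_j} + p_{e_i,e_j}$ with $p_{e_i,e_j}\in R+Rx_1+\cdots+Rx_n$. For $i<j$ this gives condition (iv) directly; the case $i>j$ requires rewriting $x_j x_i$ by using a left inverse of $c_{e_i,e_j}$ (which exists by (b)), after which the relation reshapes into the form demanded by (iv); and $i=j$ is forced to yield $c_{i,i}=1$ by the linear independence of $\mathrm{Mon}(A)$. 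The main obstacle I foresee is the inductive step for (b): organising the straightening of $x_i x^{\alpha-e_i+\beta}$ so that (i) the top-degree standard monomial is exactly $x^{\alpha+\beta}$ with the expected coefficient and (ii) left invertibility of that coefficient is visibly preserved through the iterated applications of (iv); this is where careful bookkeeping of degrees and of which $c_{k,l}$'s appear is essential.
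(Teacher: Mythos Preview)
The paper does not supply a proof of this proposition; it is quoted from \cite{LezamaGallego} and \cite{Lezama-sigmaPBW} as background, so there is no in-paper argument to compare against. Your sketch is the standard double induction used in those references and is essentially correct, but two points deserve tightening.

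First, in the inductive step for (a) you need that $x_i\,p_{\alpha-e_i,r}$, once rewritten in the basis $\mathrm{Mon}(A)$, has degree at most $|\alpha|-1$. Invoking ``condition (iv) controls the degree increase'' is the right instinct, but strictly speaking this is a special case of (b) (namely, that left multiplication by a single $x_i$ raises degree by at most one). The clean way to organise the argument is therefore to run the induction for (a) and (b) \emph{simultaneously} on the total degree, rather than proving (a) first in isolation.

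Second, the left invertibility of $c_{\alpha,\beta}$ does not come ``for free'' from the $c_{k,l}$'s being nonzero: condition (iv) only asks $c_{i,j}\in R-\{0\}$. What actually forces left invertibility is that (iv) is stated for \emph{all} pairs $(i,j)$, so for $i<j$ one has both $x_jx_i=c_{i,j}x_ix_j+(\text{linear})$ and $x_ix_j=c_{j,i}x_jx_i+(\text{linear})$; substituting and using the $R$-linear independence of $\mathrm{Mon}(A)$ yields $c_{j,i}c_{i,j}=1$, so each $c_{i,j}$ is left invertible. This, combined with part (a) applied to the already-established $c_{\alpha-e_i,\beta}$, is what propagates left invertibility through the induction for (b). Your parenthetical ``bijectivity/invertibility inherent to the computation'' gestures at this but does not pin it down.

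Finally, a small indexing slip in the converse: for $i<j$, specialising (b) to $\alpha=e_i,\ \beta=e_j$ gives the tautology $x_ix_j=1\cdot x_ix_j$; to recover (iv) you want $\alpha=e_j,\ \beta=e_i$, which yields $x_jx_i=c_{e_j,e_i}x_ix_j+(\text{linear})$ with $c_{e_j,e_i}\neq 0$, exactly as required.
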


We conclude this subsection recalling some of the main ingredients of the Gröbner theory of skew $PBW$ extensions, namely, the Division Algorithm and the notion of Gröbner basis of a left ideal of $A$. For all details see \cite{Lezama-sigmaPBW}, Chapter 13. For the condition (ii) in Definition \ref{reductionsigmapbw} below, some natural computational conditions on $R$ will be assumed.
\begin{definition}\label{LGSring}
	A ring $R$ is \textbf{left Gr\"obner soluble}\index{left Grobner soluble@left Gr\"obner soluble} {\rm(}$LGS${\rm)}\index{LGS ring@$LGS$ ring} if the
	following conditions hold:
	\begin{enumerate}
		\item[\rm (i)]$R$ is left noetherian.
		\item[\rm (ii)]Given $a,r_1,\dots,r_m\in R$ there exists an
		algorithm which decides whether $a$ is in the left ideal
		$Rr_1+\cdots+Rr_m$, and if so, finds $b_1,\dots,b_m\in R$ such that
		$a=b_1r_1+\cdots+b_mr_m$.
		\item[\rm (iii)]Given $r_1,\dots,r_m\in R$ there exists an
		algorithm which finds a finite set of generators of the left
		$R$-module
		\begin{center}
			$\mathrm{Syz}_R[r_1\ \cdots \ r_m]:=\{(b_1,\dots,b_m)\in
			R^m\mid b_1r_1+\cdots+b_mr_m=0\}$.
		\end{center}
	\end{enumerate}
\end{definition}

\begin{definition}
	Let $x^{\alpha},x^{\beta}\in \mathrm{Mon}(A)$. We say that $x^{\alpha}$
	\textbf{divides}\index{divides} $x^{\beta}$, denoted by $x^{\alpha}\mid x^{\beta}$, if there exists a unique $x^{\theta}\in \mathrm{Mon}(A)$ such that
	$x^{\beta}=lm(x^{\theta}x^{\alpha})=x^{\theta+\alpha}$ and hence
	$\beta=\theta+\alpha$.
\end{definition}

\begin{definition}\label{reductionsigmapbw}
	Let $F$ be a finite set of nonzero elements of $A$, and let
	$f,h\in A$. We say that $f$ \textbf{reduces to $h$ by $F$ in one step},\index{reduces in one step}
	denoted $f\xrightarrow{\,\, F\,\, } h$, if there exist elements
	$f_1,\dots,f_t\in F$ and $r_1,\dots,r_t\in R$ such that
	\begin{enumerate}
		\item[\rm (i)]$lm(f_i)\mid lm(f)$, $1\leq i\leq t$, i.e., there exists an
		$x^{\alpha_i}\in \mathrm{Mon}(A)$ such that
		$lm(f)=lm(x^{\alpha_i}lm(f_i))$, i.e.,
		$\alpha_i+\exp(lm(f_i))=\exp(lm(f))$.
		\item[\rm
		(ii)]$lc(f)=r_1\sigma^{\alpha_1}(lc(f_1))c_{\alpha_1,f_1}+\cdots+r_t\sigma^{\alpha_t}(lc(f_t))c_{\alpha_t,f_t}$,
		where $c_{\alpha_i,f_i}$ are defined as in Proposition
		\ref{coefficientes}, i.e.,
		$c_{\alpha_i,f_i}:=c_{\alpha_i,\exp(lm(f_i))}$.
		\item[\rm (iii)]$h=f-\sum_{i=1}^tr_ix^{\alpha_i}f_i$.
	\end{enumerate}
	We say that $f$ \textbf{reduces}\index{reduces} to $h$ by $F$, denoted $f\xrightarrow{\,\,
		F\,\, }_{+}h$, if there exist $h_1,\dots ,h_{t-1}\in A$ such that
	\begin{center}
		$\begin{CD} f @>{F}>> h_1 @>{F}>> h_2 @>{F}>>\cdots @>{F}>>h_{t-1}
		@>{F}>>h.
		\end{CD}$
	\end{center}
			$f$ is \textbf{reduced}\index{reduced} {\rm(}also called \textbf{minimal}{\rm)}\index{minimal} w.r.t.\ $F$ if $f =
	0$ or there is no one step reduction  of $f$ by $F$, i.e., one of
	the conditions $(i)$ or $(ii)$
	fails. Otherwise, we will say that $f$ is \textbf{reducible}\index{reducible} w.r.t.\ $F$. If
	$f\xrightarrow{\,\, F\,\, }_{+} h$ and $h$ is reduced w.r.t.\ $F$,
	then we say that $h$ is a \textbf{remainder}\index{remainder} for $f$ w.r.t.\ $F$.
\end{definition}
By definition we will assume that $0\xrightarrow {F} 0$.

\begin{proposition}[Division algorithm]\label{algdivforPBW}
	Let $F=\{f_1,\dots ,f_t\}$ be a finite set of nonzero polynomials
	of $A$ and $f\in A$, then there exist
	polynomials $q_1,\dots ,q_t,h\in A$, with $h$ reduced w.r.t. $F$,
	such that $f\xrightarrow{\,\, F\,\, }_{+} h$ and
	\[
	f=q_1f_1+\cdots +q_tf_t+h,
	\]
	with
	\[
	lm(f)=\max\{lm(lm(q_1)lm(f_1)),\dots
	,lm(lm(q_t)lm(f_t)),lm(h)\}.
	\]
\end{proposition}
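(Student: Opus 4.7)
The plan is to construct $q_1,\dots,q_t$ and $h$ by iterating one-step reductions starting from $f$, and to verify that the process terminates because the deglex order $\succ$ on $\mathrm{Mon}(A)\cup\{0\}$ is well-founded. Correctness (the identity $f=\sum q_if_i+h$ and the bound on leading monomials) will be maintained as invariants throughout the iteration.

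Concretely, I would initialize $h_0:=f$ and $q_i^{(0)}:=0$, and at stage $k$ test whether $h_k$ is reduced w.r.t.\ $F$. If it is, return $h:=h_k$ and $q_i:=q_i^{(k)}$. Otherwise, by Definition \ref{reductionsigmapbw} there is a one-step reduction $h_k\xrightarrow{F}h_{k+1}$ producing $r_1,\dots,r_t\in R$ and $\alpha_1,\dots,\alpha_t\in\mathbb{N}^n$ with $\alpha_i+\exp(lm(f_i))=\exp(lm(h_k))$ (for indices where $r_i\neq 0$), and I set $q_i^{(k+1)}:=q_i^{(k)}+r_ix^{\alpha_i}$ and $h_{k+1}:=h_k-\sum_i r_i x^{\alpha_i}f_i$. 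The crucial lemma is that either $h_{k+1}=0$ or $lm(h_{k+1})\prec lm(h_k)$: using Proposition \ref{coefficientes}(a), the product $x^{\alpha_i}f_i$ has leading term $\sigma^{\alpha_i}(lc(f_i))c_{\alpha_i,f_i}x^{\alpha_i+\exp(lm(f_i))}$, and since $\alpha_i+\exp(lm(f_i))=\exp(lm(h_k))$ for all contributing $i$, condition (ii) of Definition \ref{reductionsigmapbw} forces the leading coefficients of $\sum_i r_ix^{\alpha_i}f_i$ to sum to $lc(h_k)$; this cancels the leading term of $h_k$.

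Termination is then automatic: the sequence $lm(h_0)\succ lm(h_1)\succ\cdots$ is a strictly descending chain in $\mathrm{Mon}(A)$, and deglex is a well-order on $\mathbb{N}^n$ (monomials of any bounded degree form a finite set which is linearly ordered by lex), so the chain has finite length. The identity $f=\sum_i q_i^{(k)}f_i+h_k$ is preserved trivially at each step by the update rule, hence holds for the final output. For the leading monomial estimate, I note that every summand $r_ix^{\alpha_i}$ added to $q_i$ at stage $k$ satisfies $lm(lm(r_ix^{\alpha_i})lm(f_i))=lm(h_k)\preceq lm(h_0)=lm(f)$. Therefore $lm(lm(q_i)lm(f_i))\preceq lm(f)$ for each $i$, and $lm(h)\preceq lm(f)$; on the other hand, since $f\neq 0$ implies either the first reduction step occurs at level $lm(f)$ (so some $lm(lm(q_i)lm(f_i))=lm(f)$) or no reduction ever occurs (so $h=f$), the maximum is attained, giving $lm(f)=\max\{lm(lm(q_1)lm(f_1)),\dots,lm(lm(q_t)lm(f_t)),lm(h)\}$.

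The main obstacle is the strict-decrease lemma underlying termination, which is the place where the apparently technical conditions (i)--(iii) of Definition \ref{reductionsigmapbw} are justified. The subtlety is that $x^{\alpha_i}f_i$ is \emph{not} the monomial $x^{\alpha_i+\exp(lm(f_i))}$ times $lc(f_i)$; the skew relations introduce a correction $p_{\alpha_i,lc(f_i)}$ of strictly smaller degree via (\ref{611}), plus further lower-degree terms coming from the non-leading terms of $f_i$ together with (\ref{612}). Verifying that all such corrections live strictly below $lm(h_k)$ in the deglex order, so that only the prescribed coefficient combination in condition (ii) matters for killing the leading term, is where Proposition \ref{coefficientes} must be applied carefully. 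Solvability of condition (ii) when needed is the separate point secured by the $LGS$ hypothesis on $R$ (Definition \ref{LGSring}(ii)), but this concerns the \emph{effectiveness} of the algorithm rather than the \emph{existence} of the decomposition asserted by the proposition.
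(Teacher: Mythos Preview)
The paper does not actually prove Proposition~\ref{algdivforPBW}; it is stated as background and the reader is referred to \cite{Lezama-sigmaPBW}, Chapter~13, for the details. Your argument is the standard iterative-reduction proof that appears in that reference: initialize with $h_0=f$, apply one-step reductions while possible, and use the strict drop $lm(h_{k+1})\prec lm(h_k)$ together with the well-foundedness of deglex to guarantee termination, maintaining the decomposition and the leading-monomial bound as invariants. Your handling of the strict-decrease lemma via Proposition~\ref{coefficientes} and of the attainment of the maximum (distinguishing the case $h_0$ already reduced from the case where the first reduction contributes a term of exponent $\exp(lm(f))$ to some $q_i$) is correct; the only minor remark is that the compatibility $lm(f_i')\prec lm(f_i)\Rightarrow x^{\alpha_i+\exp(lm(f_i'))}\prec x^{\alpha_i+\exp(lm(f_i))}$, which you use implicitly when bounding the contribution of the non-leading terms of $f_i$, is exactly the monomial-order property recorded just before Proposition~\ref{coefficientes} in the paper.
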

\begin{definition}\label{definition3.10}
	Let $I\neq 0$ be a left ideal of $A$ and
	let $G$ be a nonempty finite subset of nonzero polynomials of
	$I$. $G$ is a \textbf{Gröbner basis} \index{Grobner basis@Gr\"obner basis} for $I$ if each element
	$0\neq f\in I$ is reducible w.r.t.\ $G$.
\end{definition}
\begin{proposition}\label{153}
	Let $I\neq 0$ be a left ideal of $A$. Then,
	\begin{enumerate}
		\item[\rm(i)]If $G$ is a Gröbner basis for $I$, then $I=\langle G\}$ $($the left ideal of $A$ generated by $G$$)$.
		\item[\rm(ii)]Let $G$ be a Gröbner basis for $I$. If $f\in I$ and
		$f\xrightarrow{\,\, G\,\, }_{+} h$, with $h$ reduced, then $h=0$.
		\item[\rm(iii)]Let $G=\{g_1,\dots,g_t\}$ be a set of nonzero polynomials of $I$ with $lc(g_i)\in R^{*}$ for each $1\leq i\leq t$. Then,
		$G$ is a Gröbner basis of $I$ if and only if given $0\neq r\in I$
		there exists an $i$ such that $lm(g_i)$ divides $lm(r)$.
	\end{enumerate}
\end{proposition}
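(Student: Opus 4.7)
The plan for parts (i) and (ii) is to combine the Division Algorithm (Proposition \ref{algdivforPBW}) with the defining property of a Gr\"obner basis. For (i), given $f\in I$, divide $f=q_1g_1+\cdots+q_tg_t+h$ with $h$ reduced w.r.t.\ $G$; since each $g_i\in I$, the remainder $h=f-\sum_i q_ig_i$ lies in $I$ as well. A nonzero $h\in I$ would then be reducible w.r.t.\ $G$ by Definition \ref{definition3.10}, contradicting that $h$ is reduced; so $h=0$ and $f\in\langle G\}$. The reverse inclusion $\langle G\}\subseteq I$ is immediate from $G\subseteq I$. Part (ii) is really the same observation: unrolling the chain $f\xrightarrow{\,\, G\,\,}_+ h$ writes $h$ as $f$ minus a left $A$-combination of elements of $G$, hence $h\in I$; a reduced nonzero element of $I$ cannot exist, so $h=0$.

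For (iii), the forward direction just reads off Definition \ref{reductionsigmapbw}(i): reducibility of $0\neq r\in I$ w.r.t.\ $G$ supplies $g_i\in G$ with $lm(g_i)\mid lm(r)$. The converse is where the hypothesis $lc(g_i)\in R^{*}$ is used. Given $0\neq f\in I$, pick $g_i$ with $lm(g_i)\mid lm(f)$ and let $\alpha\in\mathbb{N}^n$ satisfy $\alpha+\exp(lm(g_i))=\exp(lm(f))$; this handles condition (i) of Definition \ref{reductionsigmapbw} with $t=1$ and $f_1=g_i$. For condition (ii) we must solve
\[
lc(f)=r_1\,\sigma^{\alpha}(lc(g_i))\,c_{\alpha,g_i}
\]
for some $r_1\in R$. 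Proposition \ref{coefficientes}(a) ensures that $\sigma^{\alpha}(lc(g_i))$ is left-invertible (because $lc(g_i)\in R^{*}$), and Proposition \ref{coefficientes}(b) ensures that $c_{\alpha,g_i}$ is left-invertible; the product of two left-invertible elements is left-invertible, so picking $u$ a left inverse of that product and setting $r_1:=lc(f)u$ finishes the construction of a one-step reduction of $f$ by $G$.

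The main technical point is the converse of (iii); parts (i) and (ii) are essentially formal consequences of the Division Algorithm. Even the converse of (iii) is not truly an obstacle once the reduction data $(\alpha,g_i,r_1)$ are arranged, the only subtlety being to track that \emph{left} invertibility of leading coefficients is preserved under $\sigma^{\alpha}$ and under multiplication by $c_{\alpha,g_i}$, which are exactly the two invertibility statements already recorded in Proposition \ref{coefficientes}.
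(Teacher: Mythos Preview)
Your argument is correct. Part (i) and (ii) follow exactly as you say from the Division Algorithm plus the definition of a Gr\"obner basis, and in (iii) the forward implication is immediate from condition (i) of Definition \ref{reductionsigmapbw}, while for the converse your solution of
\[
lc(f)=r_1\,\sigma^{\alpha}(lc(g_i))\,c_{\alpha,g_i}
\]
via a left inverse $u$ of $\sigma^{\alpha}(lc(g_i))\,c_{\alpha,g_i}$ and $r_1:=lc(f)\,u$ is the right move; the left-invertibility of each factor is precisely what Proposition \ref{coefficientes}(a) and (b) record, and the product of left-invertible elements is again left-invertible (if $a'a=1$ and $b'b=1$ then $b'a'(ab)=1$).

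There is nothing to compare against in the paper itself: Proposition \ref{153} is stated there without proof, as part of the background material recalled from \cite{Lezama-sigmaPBW}, Chapter 13. Your write-up is in fact the standard argument one finds in that reference, so ``same approach as the paper'' is accurate in spirit even though the paper omits the details. One cosmetic remark: since the hypothesis is $lc(g_i)\in R^{*}$ (two-sided units), you could note that $\sigma^{\alpha}$ sends units to units outright, which makes the invertibility bookkeeping slightly cleaner; but your left-invertibility phrasing is more faithful to how Proposition \ref{coefficientes} is stated and is perfectly sufficient.
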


\begin{remark}\label{reamrk3.12}
	(i) We remark that the Gröbner theory of skew $PBW$ extensions and some of its important applications in homological algebra have been implemented in Maple in
	\cite{Fajardo2} and \cite{Fajardo3} (see also \cite{Lezama-sigmaPBW}). This implementation is based
	on the library \textbf{\texttt{SPBWE.lib}} specialized for working with bijective skew $PBW$
	extensions. The library has utilities to calculate Gröbner bases, and
	it includes some functions that compute the module of syzygies,
	free resolutions and left inverses of matrices, among other things. For the implementation was assumed that
	$A=\sigma(R)\langle x_1,\dots,x_n\rangle$ is a bijective skew $PBW$ extension of an $LGS$
	ring $R$ and $\mathrm{Mon}(A)$ is endowed with some monomial order $\succeq$.	
	
	(ii) \textbf{From now on in this paper we will assume that $\boldsymbol{A:=\sigma(R)\langle
			x_1,\dots ,x_n\rangle}$ is a bijective skew $\textbf{\textit{PBW}}$ extension of $\boldsymbol{R}$, where $\boldsymbol{R}$ is a left noetherian domain}. This implies that $A$ is a left noetherian domain (see \cite{Lezama-sigmaPBW}, Proposition 3.2.1 and Theorem 3.1.5: \textit{Hilbert's basis theorem for skew $PBW$ extensions}). In the examples where we use the library \texttt{SPBWE.lib} we have assumed additionally that $R$ is $LGS$. This implies that $A$ is $LGS$ (see \cite{Lezama-sigmaPBW}, Chapter 15).   
\end{remark}

\section{Algebraic sets and ideals of points for skew $PBW$ extensions}

This last section represents the novelty of the present work. We will study for the skew $PBW$ extensions the algebraic sets, the ideal of points and the relationship between them. Some properties of the affine algebraic sets of commutative algebraic geometry (see \cite{Fulton}, Chapter 1) will be extended in this section. In particular, we will prove a result (Theorem \ref{Nullstellensatz}) about an adapted version of the classical Hilbert's Nullstellensatz theorem of the commutative algebraic geometry for quasi-commutative skew $PBW$ extensions over algebraically closed fields and with polynomial center. The result covers the classical one.

We will assume on $A$ the conditions in (ii) of Remark \ref{reamrk3.12}. 

\subsection{Roots of polynomials}

For $n\geq 1$, let $R^n$ be the left $R$-module of vectors over $R$ of $n$ components. Let $f\in A$ and $Z:=(z_1,\dots,z_n)\in R^n$. By Proposition \ref{algdivforPBW},there exist
polynomials $q_1,\dots ,q_t,h\in A$, with remainder $h$ reduced w.r.t.\ $F:=\{x_1-z_1,\dots,x_n-z_n\}$,
such that $f\xrightarrow{\,\, F\,\, }_{+} h$ and
\[
f=q_1(x_1-z_1)+\cdots +q_n(x_n-z_n)+h.
\]
In general, $h$ is not unique, and even worse, it could not belong to $R$, as the next example shows.

\begin{example}
	Consider the Witten algebra (see \cite{Lezama-sigmaPBW}, Chapter 2) $A:=\sigma(\mathbb{Q})\langle x,y,z\rangle$ defined by
	\begin{center}
		$zx = xz-x$, $zy = yz+2y$, $yx = 2xy$.
	\end{center}
	For $Z:=(1,-2-3)\in \mathbb{Q}^3$ and $f:=x^2y+xz+yz\in A$, with \textbf{\texttt{SPBWE.lib}} the Algorithm Division produces
	\begin{center}
		$f=(\frac{1}{2}xy+\frac{1}{4}y)(x-1)+\frac{1}{4}(y+2)+0(z+3)+xz+yz-\frac{1}{2}$, 
	\end{center}
	i.e., $q_1=\frac{1}{2}xy+\frac{1}{4}y$, $q_2:=\frac{1}{4}$, $q_3=0$ and $h=xz+yz-\frac{1}{2}$. 
	
	Even for quasi-commutative skew $PBW$ extensions the situation is similar. In fact, consider a $3$-multiparametric quantum space (see \cite{Lezama-sigmaPBW}, Chapter 4) $A:=\sigma(\mathbb{C})\langle x,y,z\rangle$ defined by
	\begin{center}
		$yx = 2ixy$, $zx=3ixz$, $zy=-iyz$.
	\end{center}
	For $Z:=(i,2i,3i)\in \mathbb{C}^3$ and $f:=x^2y+yz^2+xz\in A$, with \textbf{\texttt{SPBWE.lib}} we found that
	\begin{center}
		$f=(\frac{1}{2}ixy-\frac{1}{4}iy)(x-i)+\frac{1}{4}(y-2i)+0(z-3i)+yz^2+xz+\frac{1}{2}i$, 
	\end{center}
	i.e., $q_1=\frac{1}{2}ixy-\frac{1}{4}iy$, $q_2:=\frac{1}{4}$, $q_3=0$ and $h=yz^2+xz+\frac{1}{2}i$.  
\end{example}  

Thus, the evaluation of a polynomial $f\in A$ in a given $Z\in R^n$ as the remainder in the Division Algorithm is not a good idea. However, the following notion does not depend on the Division Algorithm. 
\begin{definition}
	Let $n\geq 1$, $f\in A$ and $Z:=(z_1,\dots,z_n)\in R^n$. $Z$ is a \textbf{root} of $f$ if and only if $f$ is in the two-sided ideal generated by $x_1-z_1,\dots, x_n-z_n$. This condition is denoted by $f(Z)=0$.
\end{definition}
Thus,
\begin{center}
	$f(Z)=0$ if and only if $f\in \langle Z\rangle$,
\end{center}
where the two-sided ideal generated by $x_1-z_1,\dots, x_n-z_n$ is simply denoted by $\langle Z\rangle$, i.e., 
\begin{equation}
\langle Z\rangle:=\langle x_1-z_1, \dots, x_n-z_n\rangle.
\end{equation}	
\begin{definition}\label{definition4.14}
	Let $f\in A$. The \textbf{\textit{vanishing set}} of $f$, also called the \textbf{\textit{set of roots}} of $f$, is denoted by $V(f)$, and defined by  
	\begin{equation}
	V(f):=\{Z\in R^n\mid f(Z)=0\}.
	\end{equation}
	If $S\subseteq A$, then 
	\begin{equation}
	V(S):=\{Z\in R^n\mid f(Z)=0,\ \text{for every $f\in S$}\}.
	\end{equation}
	A subset $X\subseteq R^n$ is \textbf{algebraic} if either $X=R^n$ or there exists $g\neq 0\in A$ such that $X\subseteq V(g)$.
\end{definition} 

\subsection{Algebraic sets and ideals of points}

Some classical properties of affine algebraic sets of commutative algebraic geometry (see \cite{Fulton}, Chapter 1) will be extended in this subsection. 

\begin{theorem}\label{theorem3.4}
	\begin{enumerate}
		\item[\rm (i)]Let $f,g,h\in A$ and $Z:=(z_1,\dots,z_n)\in R^n$. 
		\begin{enumerate}
			\item[\rm (a)]If $f(Z)=0=g(Z)$, then $(f+g)(Z)=0$.
			\item[\rm (b)]$V(f)\subseteq V(gfh)$.
		\end{enumerate}
		\item[\rm (ii)]Let $I:=Ag$ be a left principal ideal of $A$. Then, $V(I)=V(g)$. The same is true for right and two-sided principal ideals of $A$.
		\item[\rm (iii)] 
		\begin{enumerate}
			\item[\rm (a)]$V(0)=R^n$. 
			\item[\rm (b)]$\emptyset$ is algebraic.
			\item[\rm (c)]If $S\subseteq T\subseteq A$, then $V(T)\subseteq V(S)$.
			\item[\rm (d)]If $S\subseteq A$, then $V(S)= V(AS)=V(SA)=V(ASA)$.
			\item[\rm (e)]$V(I)\cup V(J)\subseteq V(I\cap J)$, where $I,J$ are left $($right, two-sided$)$ ideals of $A$.
			\item[\rm (f)]$V(\sum_{k\in \mathcal{K}}I_k)=\bigcap_{k\in \mathcal{K}}V(I_k)$, where $I_k$ is a left left $($right, two-sided$)$ ideal of $A$.
			\item[\rm{(h)}]Let $Z:=(z_1,\dots,z_n)\in R^n$. Then, $\{Z\}\subseteq V(\langle Z\rangle)$. 
		\end{enumerate} 
		\item[\rm (iv)]Let $X\subseteq R^n$. Then,
		\begin{center}
			$I(X):=\{g\in A\mid g(Z)=0, \ \text{for every $Z\in X$}\}$
		\end{center}
		is a two-sided ideal of $A$, called the \textbf{ideal of points} of $X$. Some properties of $I(X)$ are:
		\begin{enumerate}
			\item[\rm (a)]$I(\emptyset)=A$. 
			\item[\rm{(b)}]For $X,Y\subseteq R^n$, $X\subseteq Y\Rightarrow I(Y)\subseteq I(X)$.
			\item[\rm{(c)}]If $I$ is a left $($right, two-sided$)$ ideal of $A$, then $I\subseteq I(V(I))$. 
			\item[\rm{(d)}]$X\subseteq V(I(X))$. 
			\item[\rm{(e)}]If $g\in A$, $V(I(V(g)))=V(g)$. Thus, if $X= V(g)$, then  $V(I(X))=X$.
			\item[\rm{(f)}] $I\left(  V\left(  I\left( X\right)  \right)  \right)  =I\left(
			X\right)$. 
			\item[\rm{(g)}]$I(\bigcup_{k\in \mathcal{K}}X_k)=\bigcap_{k\in \mathcal{K}}I(X_k)$.
			\item[\rm{(h)}]Let $Z:=(z_1,\dots,z_n)\in R^n$. Then, $I(\{Z\})=\langle Z\rangle$. 
		\end{enumerate}
	\end{enumerate}
\end{theorem}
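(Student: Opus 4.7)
The argument rests on a single translation: $f(Z)=0$ means exactly $f\in\langle Z\rangle$, where $\langle Z\rangle:=\langle x_1-z_1,\dots,x_n-z_n\rangle$ is the \emph{two-sided} ideal generated by the linear differences. My plan is to reformulate every claim as a membership statement about this ideal and then exploit three trivial properties of it: additive closure, absorption under both left and right multiplication by $A$, and the unitality relations $S\subseteq AS$, $S\subseteq SA$, $S\subseteq ASA$ valid for any $S\subseteq A$.

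For parts (i)--(iii), each item is then a one-line manipulation. Item (i)(a) is additive closure, and (i)(b) is two-sided absorption: $f\in\langle Z\rangle$ forces $gfh\in\langle Z\rangle$. In (ii), the inclusion $V(Ag)\subseteq V(g)$ is immediate from $g\in Ag$, while the converse follows because if $g\in\langle Z\rangle$ then $Ag\subseteq\langle Z\rangle$, as $\langle Z\rangle$ is a left ideal; the same recipe, with right- or two-sided absorption, handles $gA$ and $AgA$. For (iii): (a) follows from $0\in\langle Z\rangle$; (b) from $\emptyset\subseteq V(g)$ trivially for any nonzero $g$; (c) is the definition; (d) is (ii) applied to a general subset $S$; (e) follows from (c) via $I\cap J\subseteq I$ and $I\cap J\subseteq J$; (f) is the equivalence $\sum_{k}I_k\subseteq\langle Z\rangle$ if and only if $I_k\subseteq\langle Z\rangle$ for every $k$; and (h) is tautological, since $f\in\langle Z\rangle$ literally says $f(Z)=0$.

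For (iv), I first record that $I(X)=\bigcap_{Z\in X}\langle Z\rangle$ is a two-sided ideal as an intersection of such. This yields (a) from the empty-intersection convention, and (b) is immediate from the definition. Items (c) and (d) are the two halves of the standard Galois connection between $V$ and $I$: every $g\in I$ vanishes on $V(I)$, and every $Z\in X$ annihilates $I(X)$. Then (e) combines (d), applied with $X:=V(g)$, with the reverse inclusion extracted from $g\in I(V(g))$ and (iii)(c). For (f), the inclusion $I(X)\subseteq I(V(I(X)))$ is (c) applied to the ideal $I(X)$, and the reverse uses (d) together with (b). Item (g) is proved by the same logical equivalence as (iii)(f), applied on the other side of the correspondence. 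Finally (h) is tautological: $g\in I(\{Z\})$ means $g(Z)=0$, which is $g\in\langle Z\rangle$.

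There is no substantive obstacle; the whole theorem is bookkeeping once the root condition is read as two-sided ideal membership. The one place requiring any thought is (ii) (and its generalization (iii)(d)), where one must explicitly use that the one-sided object $Ag$ (respectively $AS$) is absorbed into the two-sided ideal $\langle Z\rangle$. A strictly one-sided definition of the ideal of a point would have blocked (ii), which justifies why $\langle Z\rangle$ is taken two-sided from the outset.
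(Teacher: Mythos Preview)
Your proposal is correct and follows essentially the same approach as the paper: both reduce every item to the tautology $f(Z)=0\iff f\in\langle Z\rangle$ and then invoke closure of the two-sided ideal $\langle Z\rangle$ under addition and two-sided multiplication. Your framing via $I(X)=\bigcap_{Z\in X}\langle Z\rangle$ and the Galois-connection language is a bit more streamlined (and your one-line argument for (iii)(b) is cleaner than the paper's, which needlessly exhibits a specific polynomial), but the substance is identical.
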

\begin{proof}
	(i) (a) We have $f,g\in \langle Z\rangle$, so $f+g\in \langle Z\rangle$, i.e., $(f+g)(Z)=0$.
	
	(b)	Let $Z\in V(f)$, then $f\in \langle Z\rangle$, then $gfh\in \langle Z\rangle$, i.e., $Z\in V(gfh)$.
	
	(ii) It is clear that $V(I)\subseteq V(g)$. From (i)-(b) we get that $V(g)\subseteq V(I)$.
	
	(iii) (a) It is clear that $V(0)=R^n$. 
	
	(b) Observe that for any $Z:=(z_1,\dots,z_n)\in R^n$, $Z\in V(x_1-z_1+\cdots+x_n-z_n)$. Thus, we have the nonzero polynomial $g:=x_1-z_1+\cdots+x_n-z_n$ and $\emptyset\subseteq V(g)$.
	
	(c) Evident.
	
	(d) Since $S\subseteq AS$, then $V(AS)\subseteq V(S)$; let $Z\in V(S)$ and $g\in AS$, then $g=p_1s_1+\cdots+p_ts_t$, with $p_i\in A$ and $s_i\in S$, $1\leq i\le t$. Since $p_is_i\in \langle Z\rangle$, then $g\in \langle Z\rangle$, so $V(S)\subseteq V(AS)$. Similarly, $V(S)=V(SA)=V(ASA)$.
	
	(e) Since $I\cap J\subseteq I,J$, then $V(I)\cup V(J)\subseteq V(I\cap J)$.
	
	(f) Since $I_k\subseteq \sum_{k\in \mathcal{K}}I_k$ for every $k\in \mathcal{K}$, then $V(\sum_{k\in \mathcal{K}}I_k)\subseteq \bigcap_{k\in \mathcal{K}}V(I_k)$. Let $Z\in \bigcap_{k\in \mathcal{K}}V(I_k)$ and let $g\in \sum_{k\in \mathcal{K}}I_k$, then $g=g_{k_1}+\cdots+g_{k_t}$, with $g_{k_j}\in I_{k_j}$, $1\leq j\leq t$, then from (i)-(a) , $g(Z)=0$, whence $Z\in V(\sum_{k\in \mathcal{K}}I_k)$. Thus, $\bigcap_{k\in \mathcal{K}}V(I_k)\subseteq V(\sum_{k\in \mathcal{K}}I_k)$.
	
	(h) Evident. 	
	
	(iv) (a)-(c) are evident from the definitions.
	
	(d) For $X=\emptyset$ the assertion follows from (a) since $\emptyset\subseteq V(A)$. Let $X\neq \emptyset$. If $Z\in X$, then for every $g\in I(X)$, $g(Z)=0$, and this means that $Z\in V(I(X))$. Therefore, $X\subseteq V(I(X))$. 
	
	(e) From (d), $V(g)\subseteq V(I(V(g)))$. Let $Z\in V(I(V(g)))$, since $g\in I(V(g))$, then $g(Z)=0$, i.e., $Z\in V(g)$. Therefore, $V(I(V(g)))\subseteq V(g)$. 
	
	(f) From (c), $I(X)\subseteq I(V(I(X)))$. From (d), $X\subseteq V(I(X))$, so from (b), $I( V(I(X))\subseteq I(X)$. 
	
	(g) Since $X_k\subseteq \bigcup_{k\in \mathcal{K}}X_k$ for every $k\in \mathcal{K}$, then $I(\bigcup_{k\in \mathcal{K}}X_k)\subseteq I(X_k)$, so $I(\bigcup_{k\in \mathcal{K}}X_k)\subseteq \bigcap_{k\in \mathcal{K}}I(X_k)$. Let $g\in \bigcap_{k\in \mathcal{K}}I(X_k)$ and let $Z\in \bigcup_{k\in \mathcal{K}}X_k$, then there exists $k\in \mathcal{K}$ such that $Z\in X_k$, then $g(Z)=0$, whence $g\in I(\bigcup_{k\in \mathcal{K}}X_k)$. 
	
	(h) Evident.
\end{proof}

Definition \ref{definition4.14} and the previous theorem induces the following consequences.

\begin{corollary}\label{corollary4.5}
	\begin{enumerate}
		\item[\rm (i)]$R^n$ has a \textbf{Zariski topology} where the closed sets are the algebraic sets. 
		\item[\rm (ii)]If $X\subseteq R^n$ is finite, then $X$ is algebraic, and hence, closed.
	\end{enumerate}
\end{corollary}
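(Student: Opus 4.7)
The plan is to verify directly the three topology axioms for the family of algebraic subsets of $R^n$, and then to reduce finite sets to finite unions of singletons.

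For part (i), I would first dispose of the trivial cases: $R^n$ is algebraic by definition, and $\emptyset$ is algebraic by Theorem \ref{theorem3.4}(iii)(b). Next I would handle arbitrary intersections. Given a family $\{X_k\}_{k\in\mathcal{K}}$ of algebraic sets, any $X_k$ equal to $R^n$ can be discarded without changing $\bigcap_k X_k$, so I may assume every $X_k$ admits a nonzero $g_k\in A$ with $X_k\subseteq V(g_k)$. Fixing any $k_0$, we have $\bigcap_k X_k\subseteq X_{k_0}\subseteq V(g_{k_0})$, so the intersection is algebraic.

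The substantive step is finite unions: given two algebraic sets $X_1,X_2$ (neither being $R^n$, else the union is $R^n$ and we are done) with $g_1,g_2\in A\setminus\{0\}$ and $X_i\subseteq V(g_i)$, I would show $V(g_1)\cup V(g_2)\subseteq V(g_1g_2)$. If $Z\in V(g_1)$, then $g_1\in\langle Z\rangle$, so $g_1g_2\in\langle Z\rangle$ because $\langle Z\rangle$ is two-sided; symmetrically for $V(g_2)$. (This is precisely an instance of Theorem \ref{theorem3.4}(i)(b).) Since $A$ is a domain by Remark \ref{reamrk3.12}(ii), $g_1g_2\neq 0$, so $X_1\cup X_2\subseteq V(g_1g_2)$ is algebraic. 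Iterating yields closure under all finite unions. This is the main obstacle of the argument, and it is the one place where we crucially use that $A$ is a domain; over a non-domain the product could vanish and the definition of algebraic set would not directly accommodate the union.

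For part (ii), I would first observe that a singleton $\{Z\}$ with $Z=(z_1,\dots,z_n)$ satisfies $\{Z\}\subseteq V(\langle Z\rangle)$ by Theorem \ref{theorem3.4}(iii)(h), and since $x_1-z_1\in\langle Z\rangle$ one even has $\{Z\}\subseteq V(x_1-z_1)$ with $x_1-z_1\neq 0$, so singletons are algebraic. A finite set $X=\{Z_1,\dots,Z_m\}$ is then a finite union of singletons, hence algebraic by the union step of part (i) (explicitly, $X\subseteq V\bigl(\prod_{j=1}^{m}(x_1-z_{j,1})\bigr)$, the product being nonzero because $A$ is a domain). By part (i), $X$ is therefore closed in the Zariski topology.
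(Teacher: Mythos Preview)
Your proof is correct, but the paper argues the finite-union step differently. Where you take the product $g_1g_2$ and use that $A$ is a domain to ensure it is nonzero, the paper instead exploits that $A$, being a left noetherian domain, is a left Ore domain: it picks a nonzero common left multiple $0\neq l\in Ag_1\cap Ag_2$ and observes $V(g_1)\cup V(g_2)\subseteq V(l)$. Likewise for part~(ii), the paper sets $f_i:=\sum_{j}(x_j-z_{ij})$ and chooses $0\neq l\in\bigcap_i Af_i$ via the Ore condition, whereas you use the product $\prod_j(x_1-z_{j,1})$. Your approach is the more elementary one: it relies only on $A$ being a domain and on Theorem~\ref{theorem3.4}(i)(b), and it sidesteps the noetherian/Ore hypothesis entirely for this corollary. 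The paper's route is a bit more circuitous and does not gain anything additional for this particular statement, though it does foreshadow the use of the Ore property elsewhere.
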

\begin{proof}
	(i) By Definition \ref{definition4.14}, $R^n$ is algebraic. From Theorem \ref{theorem3.4} we know that $X=\emptyset $ is algebraic. Moreover, let $X\subseteq V(f)$ and $Y\subseteq V(g)$ be algebraic, with $0\neq f\in A$ and $0\neq g\in A$, then, since $A$ is a left noetherian domain, $A$ is a left Ore domain, i.e, $Af\cap Ag\neq 0$, whence 
	\begin{center} 
		$X\cup Y\subseteq V(f)\cup V(g)\subseteq V(l)$,
	\end{center}
	where $0\neq l\in Af\cap Ag$. Finally, let $\{X_k\}_{k\in \mathcal{K}}$ be a family of algebraic sets of $R^n$, then for every $k\in \mathcal{K}$ there exists $0\neq g_k\in A$ such that $X_k\subseteq V(g_k)$, hence
	\[
	\bigcap_{k\in \mathcal{K}} X_k\subseteq \bigcap_{k\in \mathcal{K}}V(g_k)=\bigcap_{k\in \mathcal{K}}V(Ag_k)=V(\sum_{k\in \mathcal{K}}Ag_k)\subseteq V(Ag_k)=V(g_k),\ \text{for any $k$}.
	\]
	
	(ii) We know that $X=\emptyset $ is algebraic. Let $\emptyset \neq X:=\{Z_1,\dots,Z_r\}$, then $I(X)\neq 0$. In fact, let
	\begin{center} 
		$f_i:=(x_1-z_{i1})+\cdots+(x_n-z_{in})$, with $Z_i:=(z_{i1},\dots,z_{in})$, $1\leq i\leq r$.
	\end{center}
	Let $0\neq l\in Af_1\cap \cdots \cap Af_r$. Observe that $l\in I(X)$: Indeed, for every $i$, $l=p_if_i$, for some $p_i\in A$, so $l(Z_i)=p_if_i(Z_i)=0$. This shows that $I(X)\neq 0$. 	
	Thus, $X\subseteq V(I(X))\subseteq V(l)$ is algebraic.
	
\end{proof}

\begin{definition}
	Let $f\in A-R$. $V(f)$ is called the \textbf{skew hypersurface} defined by $f$. In particular,
	\begin{enumerate}
		\item[\rm (i)]$V(f)$ is a \textbf{skew plane curve} if $n=2$. 
		\item[\rm (ii)]$V(f)$ is a \textbf{skew hyperplane} if $\deg(f)=1$, i.e., $f=r_0+r_1x_1+\cdots +r_nx_n$, with $r_i\in A$, $0\leq i\leq n$. When $n=2$, $V(f)$
		is a \textbf{skew line}.
	\end{enumerate}
\end{definition}

\begin{corollary}\label{corollary4.20}
	Let $I$ be a left ideal of $A$. Then,
	\begin{center}
		$V(I)=V(f_1)\cap \cdots \cap V(f_r)$, where $I=Af_1+\cdots+Af_r$. 
	\end{center}
	Thus, if $f_i\in A-R$, for $1\leq i\leq r$, then $V(I)$ is a finite intersection of skew hypersurfaces.
\end{corollary}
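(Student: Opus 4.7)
The plan is to reduce the statement to two parts of Theorem \ref{theorem3.4} that have already been established, combined with the definition of a skew hypersurface that immediately precedes this corollary. The hypothesis already provides a finite set of left generators $f_1,\dots,f_r$ of $I$ (this decomposition is guaranteed anyway, since by Remark \ref{reamrk3.12}(ii) $A$ is left noetherian, so every left ideal is finitely generated), so all that remains is to identify $V(I)$ with the intersection of the vanishing sets of these generators.

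Concretely, I would first write $I$ as a finite sum of principal left ideals, namely $I=Af_1+\cdots+Af_r$, and then apply Theorem \ref{theorem3.4}(iii)(f) with the index set $\mathcal{K}=\{1,\dots,r\}$ and $I_k:=Af_k$, obtaining
\[
V(I)=V\Bigl(\sum_{k=1}^{r}Af_k\Bigr)=\bigcap_{k=1}^{r}V(Af_k).
\]
Next, I would invoke Theorem \ref{theorem3.4}(ii), which asserts that the vanishing set of a principal left ideal agrees with the vanishing set of any of its generators, so $V(Af_k)=V(f_k)$ for each $k$. Chaining the two equalities yields $V(I)=V(f_1)\cap\cdots\cap V(f_r)$, which is the first statement of the corollary.

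For the second assertion, the additional hypothesis $f_i\in A-R$ for $1\leq i\leq r$ is precisely the hypothesis under which the definition just given before the corollary declares $V(f_i)$ to be a \textbf{skew hypersurface}. Therefore, under this additional assumption, the decomposition $V(I)=\bigcap_{i=1}^{r}V(f_i)$ exhibits $V(I)$ as a finite intersection of skew hypersurfaces, as claimed. There is no real obstacle here, since the genuine content lies in Theorem \ref{theorem3.4}; the corollary is essentially a bookkeeping consequence, and the only thing worth checking in the write-up is that the application of part (f) uses left ideals (which the $Af_k$ are), a condition explicitly permitted by that statement.
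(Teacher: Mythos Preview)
Your proposal is correct and follows essentially the same approach as the paper: decompose $I$ as a sum of principal left ideals, apply Theorem~\ref{theorem3.4}(iii)(f) to get $V(I)=\bigcap_k V(Af_k)$, and then Theorem~\ref{theorem3.4}(ii) to replace each $V(Af_k)$ by $V(f_k)$. The only minor difference is that the paper explicitly singles out the trivial case $I=0$ (taking $r=1$, $f_1=0$), which you fold into the general argument.
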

\begin{proof}
	For $I=0$, $r=1$ and $f_1=0$. Let $I\neq 0$, since $A$ is left noetherian, there exist $f_1,\dots, f_r\in A$ such that $I=Af_1+\cdots+Af_r$. From Theorem \ref{theorem3.4},
	\begin{center}
		$V(I)=V(Af_1+\cdots+Af_r)=V(Af_1)\cap \cdots \cap V(Af_r)=V(f_1)\cap \cdots \cap V(f_r)$.
	\end{center}    
\end{proof}

\begin{remark}\label{remark4.21}
	(i) There exist skew $PBW$ extensions such that $V(A)\neq \emptyset$. In fact, let $A:=\sigma(\mathbb{Q})\langle x,y,z\rangle$ defined by
	\begin{center}
		$yx=xy-1$, $zx=xz$, $zy=yz$. 
	\end{center}
	Consider the left ideal $I:=A(x-1)+Ay+Az$ and observe that
	\begin{center}
		$1=-y(x-1)+(x-1)y+0z=-y(x-1)+(x-1)(y-0)+0(z-0)$,
	\end{center}
	i.e., $I=A$ and $(1,0,0)\in V(1)=V(A)$.
	
	(ii) According to Corollary \ref{corollary4.5}, if $R$ is finite, then $R^n$ is algebraic, and hence we do not need to assume this condition on $R^n$ in Definition \ref{definition4.14}. But if $R$ is infinite, we can not assert that there is $0\neq g\in A$ such that $R^n\subseteq V(g)$. Consider for example that $A:=\mathbb{F}[x_1,\dots,x_n]$ is the commutative ring of polynomials with coefficients in an infinite field $\mathbb{F}$, then $I(\mathbb{F}^n)=0$ (see \cite{Fulton}, Chapter 1) and contrary assume that there exists $0\neq g\in A$ such that $\mathbb{F}^n\subseteq V(g)$, hence $I(V(g))\subseteq I(\mathbb{F}^n)=0$, but $g\in I(V(g))$, a contradiction.
\end{remark}

We conclude this subsection with a corollary that complements the part (i) of Remark \ref{remark4.21} for the particular case of classical polynomials of one single variable, but with ring of coefficients a little bit more general. For this, we give first a proposition that describes the normal elements of quasi-commutative skew $PBW$ extensions. 
Recall that $f\in A$ is \textbf{\textit{normal}} if $Af=fA$. 

\begin{proposition}\label{proposition4.22}
	Assume that $A$ is quasi-commutative.
	\begin{enumerate}
		\item[\rm (i)]Let $f=cx^{\alpha}h\in A$, where $c\in R^*$ is normal in R, $x^{\alpha}\in {\rm Mon}(A)$ and $h\in Z(A)$. Then, $f$ is normal.
		\item[\rm (ii)]Let $f=c_1x^{\alpha_1}+\cdots
		+c_tx^{\alpha_t}\in A$, with $c_i\in R-\{0\}$, $1\leq i\leq t$, and $x^{\alpha_1}\succ \cdots \succ
		x^{\alpha_t}$. If $f$ is normal, then $c_i$ is normal in $R$, for every $1\leq i\leq t$.
	\end{enumerate}   
\end{proposition}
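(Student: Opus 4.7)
For part (i), I would decompose $f = cx^\alpha h$ and handle each factor in turn, exploiting the quasi-commutative hypothesis. First, $x^\alpha$ is normal in $A$: the relation $x^\alpha r = \sigma^\alpha(r) x^\alpha$ from Proposition \ref{coefficientes} (with $p_{\alpha,r} = 0$ in the quasi-commutative case) together with bijectivity of $\sigma^\alpha$ gives $x^\alpha R = R x^\alpha$; and the identity $x^\alpha x^\beta = c_{\alpha,\beta} x^{\alpha+\beta} = c_{\alpha,\beta} c_{\beta,\alpha}^{-1} x^\beta x^\alpha$, using invertibility of $c_{\beta,\alpha}$ (bijective hypothesis), extends this to $x^\alpha A = A x^\alpha$. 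Since $c \in R^*$ is a unit, $cA = A = Ac$, so $A(cx^\alpha) = (Ac) x^\alpha = (cA) x^\alpha = c(Ax^\alpha) = c(x^\alpha A) = (cx^\alpha) A$, giving normality of $cx^\alpha$. Finally, centrality of $h$ (so $Ah = hA$) yields $Af = A(cx^\alpha)\, h = (cx^\alpha) A \cdot h = (cx^\alpha)(hA) = (cx^\alpha h) A = fA$.

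For part (ii), the plan is to extract the normality of each coefficient via a leading-term argument. Given $r \in R$, normality of $f$ furnishes $a \in A$ with $rf = fa$. The critical step is to show $a \in R$. Write $a = d_1 x^{\beta_1} + \cdots + d_s x^{\beta_s}$ with $x^{\beta_1} \succ \cdots \succ x^{\beta_s}$ and $d_j \in R \setminus \{0\}$. In the quasi-commutative setting $x^{\alpha_i} x^{\beta_j} = c_{\alpha_i,\beta_j} x^{\alpha_i+\beta_j}$ (no lower-order corrections), so
\[
fa = \sum_{i,j} c_i\, \sigma^{\alpha_i}(d_j)\, c_{\alpha_i,\beta_j}\, x^{\alpha_i + \beta_j}.
\]
By compatibility of $\succ$ with addition of exponents, $\alpha_1 + \beta_1$ strictly dominates all other $\alpha_i + \beta_j$, and its coefficient $c_1 \sigma^{\alpha_1}(d_1) c_{\alpha_1,\beta_1}$ is nonzero because $R$ is a domain, $\sigma^{\alpha_1}$ is injective, and $c_{\alpha_1,\beta_1}$ is invertible. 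Since $lm(rf) = x^{\alpha_1}$ for $r \neq 0$, matching leading monomials forces $\beta_1 = 0$, i.e.\ $a \in R$; and for $r = 0$ one has $a = 0 \in R$ since $A$ is a domain and $f \neq 0$.

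With $a \in R$, equating coefficients in $rf = c_1 \sigma^{\alpha_1}(a) x^{\alpha_1} + \cdots + c_t \sigma^{\alpha_t}(a) x^{\alpha_t}$ gives $r c_i = c_i \sigma^{\alpha_i}(a) \in c_i R$ for each $i$, so $R c_i \subseteq c_i R$. The symmetric argument on $fr \in fA = Af$ produces $a' \in A$ with $fr = a' f$; an identical leading-term analysis (now with leading monomial $x^{\gamma_1+\alpha_1}$) forces $a' \in R$, and the resulting identity yields $c_i \sigma^{\alpha_i}(r) = a' c_i \in R c_i$ for every $r \in R$. Surjectivity of $\sigma^{\alpha_i}$ on $R$ then gives $c_i R \subseteq R c_i$, and combining both inclusions, $c_i R = R c_i$, so each $c_i$ is normal in $R$.

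The main technical subtlety is the leading-term identification in part (ii): it relies essentially on quasi-commutativity, since otherwise the products $x^{\alpha_i} x^{\beta_j}$ acquire lower-order corrections $p_{\alpha_i,\beta_j}$ (Proposition \ref{coefficientes}) which could in principle cancel against the leading term of another summand. Quasi-commutativity removes these corrections, and together with $R$ being a domain, $\sigma^{\alpha_i}$ injective, and $c_{\alpha_i,\beta_j}$ invertible, this ensures the leading coefficient of $fa$ never vanishes, so the leading monomial is unambiguously determined.
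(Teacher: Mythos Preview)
Your proof is correct and follows the same overall structure as the paper's: in (i) you factor $f$ and verify each factor is normal, and in (ii) you reduce to showing the auxiliary element lies in $R$, then compare coefficients. Two minor differences are worth noting. For (i), you observe that $c\in R^{*}$ alone already gives $cA=A=Ac$, which is a cleaner shortcut than the paper's explicit computation of $ac$ (where the paper inserts $cc^{-1}$ and uses normality of $c$ in $R$). For (ii), you carry out a full leading-monomial analysis to pin down $a\in R$, whereas the paper argues more briefly via total degree: since $A$ is a domain, $\deg(rf)=\deg(f)=\deg(fa)=\deg(f)+\deg(a)$ forces $\deg(a)=0$. Both arguments are valid; yours is more explicit about why the top term cannot cancel, while the paper's is shorter.
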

\begin{proof}
	$\succeq$ is the ${\rm deglex}$ order on ${\rm Mon}(A)$, but any other monomial order could be used (for other monomial orders see \cite{Lezama-sigmaPBW}, Chapter 13).
	
	(i) Since the product of normal elements is normal and clearly  $h$ is normal, then we have to show only that $c$ and $x^{\alpha}$ are normal elements of $A$. Let $a:=a_1x^{\beta_1}+\cdots
	+a_sx^{\beta_s}\in A$,  with $a_j\in R-\{0\}$, $1\leq j\leq s$, and $x^{\beta_1}\succ \cdots \succ
	x^{\beta_s}$. We have $ac=(a_1x^{\beta_1}+\cdots
	+a_sx^{\beta_s})c=a_1x^{\beta_1}c+\cdots
	+a_sx^{\beta_s}c=a_1\sigma^{\beta_1}(c)x^{\beta_1}+\cdots+a_s\sigma^{\beta_s}(c)x^{\beta_s}=a_1cc^{-1}\sigma^{\beta_1}(c)x^{\beta_1}+\cdots +a_scc^{-1}\sigma^{\beta_s}(c)x^{\beta_s}=c(a_1c^{-1}\sigma^{\beta_1}(c)x^{\beta_1}+\cdots +a_sc^{-1}\sigma^{\beta_s}(c)x^{\beta_s})$, thus $Ac\subseteq cA$. Since $A$ is bijective, then we can prove similarly that $cA\subseteq Ac$. Now, $ax^{\alpha}=(a_1x^{\beta_1}+\cdots
	+a_sx^{\beta_s})x^{\alpha}=a_1x^{\beta_1}x^{\alpha}+\cdots+a_sx^{\beta_s}x^{\alpha}=a_1c_1'x^{\alpha}x^{\beta_1}+\cdots+a_sc_s'x^{\alpha}x^{\beta_s}=x^{\alpha}\sigma^{-\alpha}
	(a_1c_1')x^{\beta_1}+\cdots+x^{\alpha}\sigma^{-\alpha}(a_sc_s')x^{\beta_s}$, for some $c_j'\in R^*$, $1\leq j\leq s$, thus $Ax^{\alpha}\subseteq x^{\alpha}A$. In a similar way we can prove that $x^{\alpha}A\subseteq Ax^{\alpha}$.
	
	(ii) Let $r\in R-\{0\}$, then $rf\in Af=fA$, so $rf=fg$, for some $g\in A$. Since $A$ is a domain, $\deg(rf)=\deg(f)=\deg(fg)=\deg(f)+\deg(g)$, so $g\in R-\{0\}$, but as $A$ is quasi-commutative, for every $1\leq i\leq t$, $rc_i=c_i\sigma^{\alpha_i}(g)$. This means that $Rc_i\subseteq c_iR$. Considering now $fr\in fA=Af$ we get that $fr=hf$, with $h\in R-\{0\}$, so for every $i$, $c_i\sigma^{\alpha_i}(r)=hc_i$, but since $A$ is bijective, $\sigma^{\alpha_i}(R)=R$, and hence $c_iR\subseteq Rc_i$. Thus, $c_iR= Rc_i$, i.e., $c_i$ is normal in $R$.  
\end{proof} 

\begin{corollary}\label{proposition4.23}
	Let $S$ be a left noetherian domain and $B:=S[x]$ be the habitual ring of polynomials.
	\begin{enumerate}
		\item[\rm (i)]Let $f\in B$, with ${\rm lc}(f)\in S^*$. $f$ is a normal polynomial if and only if $f=cx^th$, where $c\in S^*$ is normal, $t\geq 0$ and $h\in Z(B)$.
		\item[\rm (ii)]Let $\mathbb{F}$ be an algebraically closed field and assume that $S$ is an $\mathbb{F}$-algebra with trivial center.
		Let $I=Bf_1+\cdots+Bf_r$ be a left ideal of $B$, where $0\neq f_i$ is normal and ${\rm lc}(f_i)\in S^*$, for every $1\leq i\leq r$. If $V(I)=\emptyset$, then $I=B$.  
	\end{enumerate}
\end{corollary}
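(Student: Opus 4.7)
The overall plan is to prove (i) directly from Proposition \ref{proposition4.22}, and then to exploit the explicit description of normal polynomials from (i) to reduce (ii) to the classical Nullstellensatz for $\mathbb{F}[x]$. Throughout, I rely on the fact that $B = S[x]$ is a bijective quasi-commutative skew $PBW$ extension of $S$ with $\sigma = \mathrm{id}$ and $\delta = 0$, so that $x$ commutes with every element of $S$.

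For the implication $(\Leftarrow)$ of (i), I would simply invoke Proposition \ref{proposition4.22}(i) with $R = S$, $A = B$, and $x^{\alpha} = x^{t}$, which says exactly that $cx^{t}h$ is normal under the stated hypotheses. For $(\Rightarrow)$, set $c := \mathrm{lc}(f)$; by Proposition \ref{proposition4.22}(ii), $c$ is normal in $S$, and as $c \in S^{*}$ the inverse $c^{-1}$ is a normal unit as well. Define $h := c^{-1}f$; as a product of normal elements it is normal in $B$, and by construction it is monic of degree $\deg f$. The crux is to show that any monic normal polynomial in $B$ is central: for $s \in S$, normality supplies $g \in B$ with $sh = hg$, and comparing leading terms on both sides (using monicity of $h$ and that $x$ commutes with $S$) forces $\deg g = 0$ and $g = s$; hence $sh = hs$ for every $s \in S$, which pushes every coefficient of $h$ into $Z(S)$ and yields $h \in Z(S)[x] \subseteq Z(B)$. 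Then $f = c\cdot 1\cdot h$ is the desired decomposition with $t = 0$.

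For (ii), apply (i) to each $f_i$ to write $f_i = c_i x^{t_i} h_i$ with $c_i \in S^{*}$ normal and $h_i \in Z(B)$. The trivial-center hypothesis gives $Z(S) = \mathbb{F}$, so (by the same coefficient-comparison used above) $Z(B) = \mathbb{F}[x]$; hence $g_i := x^{t_i} h_i$ is a nonzero element of $\mathbb{F}[x]$ and $f_i = c_i g_i$. Since each $c_i$ is a unit of $B$, $Bf_i = Bg_i$, so $I = \sum_{i=1}^{r} Bg_i$. Set $J := \sum_{i=1}^{r} \mathbb{F}[x]\,g_i \subseteq I$; as $\mathbb{F}[x]$ is a PID, $J = \mathbb{F}[x]\,g$ with $g := \gcd(g_1,\dots,g_r) \neq 0$. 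If $\deg g \geq 1$, algebraic closure of $\mathbb{F}$ yields a root $\alpha \in \mathbb{F}$ of $g$, hence $x - \alpha$ divides every $g_i$ in $\mathbb{F}[x]$; since $\alpha \in \mathbb{F} \subseteq Z(S)$, the element $x - \alpha$ is central in $B$ and generates the two-sided ideal $\langle x - \alpha\rangle$, so $f_i = c_i g_i \in \langle x - \alpha\rangle$ for all $i$ and $\alpha \in V(I) = \emptyset$, a contradiction. Therefore $g$ is a nonzero constant, $1 \in J \subseteq I$, and $I = B$.

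The main subtlety I expect lies in the monic-normal-is-central step of (i): it depends crucially on the shape $B = S[x]$ with $\sigma = \mathrm{id}$, because only then do the leading terms of $sh$ and $hg$ align cleanly and force $g \in S$. The parallel subtlety in (ii) is that at the final step one really needs $x - \alpha$ to be \emph{central} in $B$ (not merely normal), so that $B(x-\alpha) = \langle x - \alpha\rangle$ and the common root $\alpha$ of the $g_i$ actually witnesses membership of the $f_i$ in the two-sided ideal $\langle x - \alpha\rangle$; this is precisely what the combination of algebraic closure of $\mathbb{F}$ and trivial center of $S$ secures.
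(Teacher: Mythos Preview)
Your proof is correct and in fact streamlines the paper's argument in both parts. For (i), the paper also reduces to the monic case but then runs an induction on the degree, splitting into the cases $f_0\neq 0$ (where it argues, essentially as you do, that the polynomial is central) and $f_0=0$ (where it factors out a power of $x$ and invokes the inductive hypothesis); your direct leading-term comparison $sh=hg\Rightarrow g=s$ shows that any monic normal polynomial in $S[x]$ is already central, so the case split and the induction are unnecessary and you always land on $t=0$. For (ii), the paper follows the same reduction to $g_i:=x^{t_i}h_i\in\mathbb{F}[x]$ and their $\gcd$, but then argues via $V(d)\subseteq V(I)=\emptyset$ to force $d\in\mathbb{F}^*$ and must afterwards multiply the B\'ezout identity by $c_1\cdots c_r$ (using normality of the $c_i$) to land back in $I$; your observation that each $c_i$ is a \emph{unit}, so that $Bf_i=Bg_i$ and $I\supseteq \sum_i\mathbb{F}[x]g_i=\mathbb{F}[x]g$, makes that final manipulation superfluous.
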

\begin{proof}
	Notice first that $B$ is a quasi-commutative bijective skew $PBW$ extension of $S$, so we can use all the previous results.
	
	(i) $\Rightarrow )$ Let $f:=f_0+f_1x\cdots +f_nx^n\neq 0$, with $f_n\neq 0$. We can assume that $f$ is monic. In fact, $f=f_n(f_n^{-1}f_0+f_n^{-1}f_1x+\cdots+x^n)=f_nf'$, with $f':=f_n^{-1}f_0+f_n^{-1}f_1x+\cdots+x^n$. We have $Bf=fB$, but from Proposition \ref{proposition4.22}, $f_n$ is a normal element of $S$, so  $Bf_n=f_nB$, and hence $Bf_nf'=f_nf'B=f_nBf'$, but since $B$ is a domain, then $f'B=Bf'$. Therefore, if we prove the claimed for monic polynomials, then $f'=c'x^th$, with $c'\in S^*$ normal, $t\geq 0$ and $h\in Z(B)$, so $f=cx^th$, with $c:=f_nc'\in S^*$ normal.  
	
	We will prove the claimed by induction on $n$. If $n=0$, then $c=f=1\in S^*$ normal, $t=0$ and $h=1$. If $n=1$, then $f=f_0+x$; if $f_0=0$, then we get the claimed with $c=1$, $t=1$ and $h=1$. Assume that $f_0\neq 0$; let $s\in S$, then $fs=bf$, with $b\in B$, this implies that $b:=b_0\in S$, and hence $f_0s+sx=b_0f_0+b_0x$, whence $s=b_0$ and $f_0s=sf_0$, i.e., $f_0\in Z(S)$. Therefore, $f\in Z(B)$ and we get the claimed with $c=1$, $t=0$ and $h=f$. This completes the proof of case $m=1$.
	
	Assume the claimed proved for non zero monic normal polynomials of degree $\leq n-1$. We will consider two possible cases. 
	
	\textit{Case 1}. $f_0\neq 0$. As before, let $s\in S$, then $fs=bf$, with $b\in B$, but this implies that $b:=b_0\in S$ and $f_is=sf_i$ for every $0\leq i\leq n-1$. Thus, $f\in Z(B)$ and we get the claimed with $c=1$, $t=0$ and $h=f$.
	
	\textit{Case 2}. $f_0=0$. Let $l$ be minimum such that $f_l\neq 0$. Then $f=x^{l}f'$, with $l\geq 1$ and $0\neq f'\in A$ monic. Observe that $x^l$ and $f'$ are normal: In fact, $x^lB=Bx^l$ and $Bf=Bx^lf'=x^lBf'=x^lf'B=fB$, but since $B$ is a domain, then $Bf'=f'B$. By induction, there exist $c'\in S^*$ normal, $l'\geq 0$ and $h\in Z(B)$ such that $f'=c'x^{l'}h$, hence $f=x^{l}c'x^{l'}h=c'x^{l+l'}h$, so we obtain the claimed with $c:=c'$ and $t:=l+l'$.
	
	$\Leftarrow )$ This follows from the previous proposition. 
	
	(ii) From Corollary \ref{corollary4.20} we have that $V(I)=V(f_1)\cap \cdots \cap V(f_r)$, but by (i), for every $1\leq i\leq r$, $f_i=c_ix^{t_i}h_i$, where $c_i\in S^*$ is normal, $t_i\geq 0$ and $h_i\in Z(B)=Z(S)[x]=\mathbb{F}[x]$. Since $c_i\in S^*$, then $V(f_i)=V(x^{t_i}h_i)$, hence
	\begin{center} 
		$V(I)\supseteq V(x^{t_1}h_1)\cap \cdots \cap V(x^{t_r}h_r)\supseteq V(d)$, where $d:={\rm gcrd}(x^{t_1}h_1,\dots,x^{t_r}h_r)$ in $\mathbb{F}[x]$. 
	\end{center}
	Since $V(I)=\emptyset$, then $V(d)=\emptyset$ with respect to $R^n$, whence, $V(d)=\emptyset$ with respect to $\mathbb{F}^n$, but $\mathbb{F}$ is algebraically closed, then $d\in \mathbb{F}^*$. We have $d=g_1x^{t_1}h_1+\cdots+g_rx^{t_r}h_r$, for some $g_1,\dots,g_r\in \mathbb{F}[x]$. Let $c:=c_1\cdots c_r\in S^*$, since $c_i$ is normal and $g_i\in Z(B)$, for every $1\leq i\leq r$, we get $cd=g_1'c_1x^{t_1}h_1+\cdots+g_r'c_rx^{t_r}h_r$, where every $g_i'\in B$. Thus, $cd=g_1'f_1+\cdots+g_r'f_r\in I\cap S^*$, i.e., $I=B$.  
\end{proof}

\subsection{Hilbert's Nullstellensatz theorem for skew $PBW$ extensions}

In this subsection we prove the main result of the paper, namely, to give an adaptation of the Hilbert's Nullstellensatz theorem for skew $PBW$ extensions such that it covers the classical theorem of commutative algebraic geometry. For this purpose we need the following preliminary lemma supported in the Gröbner theory of skew $PBW$ extensions.

\begin{lemma}\label{lemma4.15}	Let $A:=\sigma(\mathbb{F})\langle x_1,\dots,x_n\rangle$ be a quasi-commutative bijective skew $PBW$ extension of $\mathbb{F}$, where $\mathbb{F}$ is a field. Then, for every $Z:=(z_1,\dots,z_n)\in \mathbb{F}^n$, $\langle Z\rangle$ is completely semiprime. 
\end{lemma}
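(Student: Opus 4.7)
The plan is to reduce complete semiprimeness of $\langle Z\rangle$ to the observation that the quotient $B:=A/\langle Z\rangle$ is either the zero ring or a copy of the field $\mathbb{F}$. Either alternative instantly delivers the conclusion: in a field, $\bar{a}^{2}=0$ forces $\bar{a}=0$, and in the trivial ring there is nothing to check.

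First I would note that, by Definition \ref{gpbwextension}, $A$ is generated as a unital ring by $\mathbb{F}\cup\{x_{1},\dots,x_{n}\}$. Writing $\pi\colon A\to B$ for the canonical projection, the relations $x_{i}-z_{i}\in\langle Z\rangle$ give $\pi(x_{i})=\pi(z_{i})\in\pi(\mathbb{F})$ for every $i$, so $B$ is generated as a ring by the commutative subring $\pi(\mathbb{F})$. This collapses $B$ onto $\pi(\mathbb{F})$ and in particular makes $B$ commutative.

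Next I would identify $\pi(\mathbb{F})$: the restriction $\pi|_{\mathbb{F}}$ is a ring homomorphism out of a field, so its kernel $\mathbb{F}\cap\langle Z\rangle$ equals either $(0)$ or $\mathbb{F}$. Accordingly $B\cong\mathbb{F}$ or $B=0$. Translating back through $\pi$: if $a^{2}\in\langle Z\rangle$ then $\pi(a)^{2}=0$, which in either case forces $\pi(a)=0$, i.e.\ $a\in\langle Z\rangle$. The reverse implication is immediate.

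The step I expect to require the most care is the bookkeeping around the case $\langle Z\rangle=A$, which genuinely occurs in related (non-quasi-commutative) settings such as the one in Remark \ref{remark4.21}(i). In the present quasi-commutative setup, properness of $\langle Z\rangle$ forces the scalar consistency identities $(\sigma_{i}(r)-r)z_{i}=0$ and $(1-c_{ij})z_{i}z_{j}=0$ in $\mathbb{F}$, which I would flag explicitly so that the reader sees precisely why the skew structure of $A$ disappears under the substitution $x_{i}\mapsto z_{i}$. Notably, the Gr\"obner-basis machinery of the previous section is not required for this lemma\textemdash the one-line observation that $\pi(x_{i})\in\pi(\mathbb{F})$ is what drives the whole argument.
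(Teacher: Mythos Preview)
Your quotient argument for the implication $a^{2}\in\langle Z\rangle \Rightarrow a\in\langle Z\rangle$ is correct and cleaner than the paper's: once $\pi(x_{i})=\pi(z_{i})\in\pi(\mathbb{F})$, the ring $B=A/\langle Z\rangle$ is generated by $\pi(\mathbb{F})$, hence equals $\pi(\mathbb{F})$, which is either $\mathbb{F}$ or $0$, and in either case the implication is immediate. The paper reaches the same conclusion by dividing $f$ on the left by $\{x_{1}-z_{1},\dots,x_{n}-z_{n}\}$ (Proposition~\ref{algdivforPBW}) to get a scalar remainder $h\in\mathbb{F}$ and then arguing that $h\neq 0$ would put the unit $h^{2}$ into $\langle Z\rangle$. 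Your route bypasses the division algorithm for this half of the argument, which is a genuine simplification.

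There is, however, a gap. Under the paper's conventions (the definition preceding Proposition~\ref{proposition4.14}) ``completely semiprime'' is only formulated for \emph{proper} two-sided ideals, and the paper accordingly devotes the first half of its proof to establishing $\langle Z\rangle\neq A$ via the Gr\"obner characterisation of Proposition~\ref{153}(iii). You allow the alternative $B=0$ and say ``in the trivial ring there is nothing to check'', but with the paper's definition the case $\langle Z\rangle=A$ would \emph{fail} to be completely semiprime, so it must be excluded, not waived. Your final paragraph gestures at the issue but runs the implication the wrong way: you derive the identities $(\sigma_{i}(r)-r)z_{i}=0$ and $(1-c_{ij})z_{i}z_{j}=0$ \emph{from} properness rather than using them to establish it. And the case is not vacuous: already in the quantum plane $\mathbb{F}_{q}[x,y]$ with $q\neq 1$ and $Z=(1,1)$ one computes $(y-1)(x-1)-q(x-1)(y-1)=(q-1)(x+y-1)$, whence $1\in\langle x-1,y-1\rangle$. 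So your closing remark that the Gr\"obner machinery ``is not required for this lemma'' is premature: properness is exactly the step for which the paper invokes it, and you have offered no substitute.
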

\begin{proof}
	We have to show first that $\langle Z\rangle\neq A$: Contrary, assume that $1\in A$, then 
	\begin{center}
		$1=p_1(x_{i_1}-z_{i_1})q_1+\cdots+p_m(x_{i_m}-z_{i_m})q_m$, with $p_j,q_j\in A$ and $i_j\in \{1,\dots,n\}$, $1\leq j\leq m$.
	\end{center}
	By Proposition \ref{algdivforPBW},
	\begin{center} 
		$q_j=q_j'(x_{i_j}-z_{i_j})+h_j$, with $q_j',h\in A$, $h_j$ reduced w.r.t. $x_{i_j}-z_{i_j}$, $1\leq j\leq m$.
	\end{center}
	So, 
	\begin{center}
		$1=p_1(x_{i_1}-z_{i_1})(q_1'(x_{i_1}-z_{i_1})+h_1)+\cdots+p_m(x_{i_m}-z_{i_m})(q_m'(x_{i_m}-z_{i_m})+h_m)=p_1(x_{i_1}-z_{i_1})q_1'(x_{i_1}-z_{i_1})+p_1(x_{i_1}-z_{i_1})h_1+
		\cdots+p_m(x_{i_m}-z_{i_m})q_m'(x_{i_m}-z_{i_m})+p_m(x_{i_m}-z_{i_m})h_m$.
	\end{center}
	
	Since every $h_j$ is reduced, then $h_j\in \mathbb{F}$, but $A$ is quasi-commutative, then $1\in I$, where $I$ is a left ideal of $A$ generated by elements of the form  $c_{i_j}x_{i_j}-z_{i_j}'$, where $c_{i_j},z_{i_j}'\in \mathbb{F}$ with $c_{i_j}\neq 0$, $1\leq j\leq m$ (actually, in some cases $c_{i_j}=1$ and $z_{i_j}'=z_{i_j}$, and in other cases, when $h_j\neq 0$, then $c_{i_j}=\sigma_{i_j}(h_j)$ and $z_{i_j}'=z_{i_j}h_j$, where $\sigma_{i_j}$ is as in Proposition \ref{sigmadefinition}). It is clear from Definition \ref{definition3.10} that the generators $c_{i_j}x_{i_j}-z_{i_j}'$ of $I$ conform a Gröbner basis of $I$. Since $1\in I$, from the part (iii) of Proposition \ref{153} we get that $x_{i_j}$ divides $1$ for some $j$, a contradiction. Hence, $\langle Z\rangle\neq A$.     
	
	Now, let $f\in A$ such that $f^2\in \langle Z\rangle$. Applying again Proposition \ref{algdivforPBW}, there exist
	polynomials $p_1,\dots ,p_t,h\in A$, with remainder $h$ reduced w.r.t.\ $F:=\{x_1-z_1,\dots,x_n-z_n\}$, such that
	\begin{center}
		$f=p_1(x_1-z_1)+\cdots +p_n(x_n-z_n)+h$.
	\end{center}
	Since $h$ is reduced, then $h\in \mathbb{F}$. If $h=0$, then $f\in \langle Z\rangle$ and the proof is over. Assume that $h\neq 0$, then
	\begin{center} 
		$h^2=(f-[p_1(x_1-z_1)+\cdots +p_n(x_n-z_n)])^2\in \langle Z\rangle$,
	\end{center} 
	hence $\langle Z\rangle=A$, a contradiction.
\end{proof} 

\begin{theorem}[\textbf{Hilbert's Nullstellensatz}]\label{Nullstellensatz}
	Let $A:=\sigma(\mathbb{F})\langle x_1,\dots,x_n\rangle$ be a quasi-commutative bijective skew $PBW$ extension of $\mathbb{F}$, where $\mathbb{F}$ is an algebraically closed field. Assume that $Z(A)$ is a polynomial ring in $n$ variables with coefficients in $\mathbb{F}$.  
	Let $I$ be a two-sided ideal of $A$. Then,  
	\begin{center}
		$\langle I_{Z(A)}(V_{Z(A)}(J))\rangle\subseteq \sqrt{I}\subseteq I(V(I))$,
	\end{center}
	where $J:=l^{-1}(I)$, $l:Z(A)\to A$ is the inclusion of the center of $A$ in $A$, $V_{Z(A)}(J)$ is the vanishing set of $J$ with respect to $Z(A)$ and $I_{Z(A)}(V_{Z(A)}(J))$ is the ideal of points of $V_{Z(A)}(J)$ with respect to $Z(A)$. 
\end{theorem}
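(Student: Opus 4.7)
The plan is to prove the two inclusions separately, with the first relying on the classical commutative Nullstellensatz applied to the center and the second on Lemma \ref{lemma4.15}.

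For the inclusion $\langle I_{Z(A)}(V_{Z(A)}(J))\rangle\subseteq \sqrt{I}$, I would exploit the hypothesis that $Z(A)\cong \mathbb{F}[y_1,\dots,y_n]$ is a polynomial ring over an algebraically closed field. The classical Hilbert Nullstellensatz applied to the ideal $J=l^{-1}(I)\subseteq Z(A)$ gives $I_{Z(A)}(V_{Z(A)}(J))=\sqrt{J}^{\mathrm{cl}}$, the usual commutative radical. So it suffices to show $\sqrt{J}^{\mathrm{cl}}\subseteq \sqrt{I}$ and then invoke the fact that $\sqrt{I}$ is a two-sided ideal (intersection of two-sided prime ideals) to extend this to $\langle\sqrt{J}^{\mathrm{cl}}\rangle\subseteq \sqrt{I}$. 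Given $a\in \sqrt{J}^{\mathrm{cl}}$, there exists $m$ with $a^m\in J\subseteq I$, so $a$ is $I$-nilpotent. Since $a\in Z(A)$, the remark following the initial definitions (``If $a\in Z(S)$, then the converse is true'') promotes $I$-nilpotency to $I$-strong nilpotency; alternatively, for any sequence with $a_1=a$ and $a_{i+1}\in a_i A a_i$, centrality forces $a_i\in a^{2^{i-1}}A$, so eventually $a_i\in I$. By the characterization of $\sqrt{I}$ as the set of $I$-strongly nilpotent elements, $a\in \sqrt{I}$.

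For the inclusion $\sqrt{I}\subseteq I(V(I))$, I would use Theorem \ref{theorem3.4}(iv)(h), (g) to write $I(V(I))=\bigcap_{Z\in V(I)}\langle Z\rangle$, and reduce to showing $\sqrt{I}\subseteq \langle Z\rangle$ for each $Z\in V(I)$. For such $Z$, every generator of $I$ lies in $\langle Z\rangle$, so $I\subseteq \langle Z\rangle$; and by Lemma \ref{lemma4.15}, $\langle Z\rangle$ is completely semiprime. Let $f\in \sqrt{I}$, so $f$ is $I$-strongly nilpotent. Apply the definition to the specific sequence $a_1:=f$, $a_{i+1}:=a_i^2\in a_i A a_i$; since $a_i=f^{2^{i-1}}$, $I$-strong nilpotency yields some $m$ with $f^{2^{m-1}}\in I\subseteq \langle Z\rangle$. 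The observation recorded in the paper that complete semiprimeness is equivalent to ``$a^k\in P\Leftrightarrow a\in P$ for all $k\geq 1$'' then collapses this to $f\in \langle Z\rangle$, as required.

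The main obstacle, or at least the key idea requiring care, is the second inclusion: it hinges on extracting the right sequence in the definition of $I$-strongly nilpotent to pair Lemma \ref{lemma4.15} with the strong-nilpotent description of $\sqrt{I}$. The first inclusion is more bookkeeping, the only subtle point being that centrality of $a$ is what lets us pass from $a^m\in I$ to $I$-strong nilpotency in the noncommutative ring $A$; without the polynomial-center hypothesis neither the invocation of the commutative Nullstellensatz nor this promotion would be available.
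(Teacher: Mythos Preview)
Your proposal is correct and follows essentially the same approach as the paper: the first inclusion is obtained by applying the classical Nullstellensatz to $J$ in $Z(A)$ and using centrality to upgrade $I$-nilpotency to $I$-strong nilpotency, and the second inclusion is obtained by combining $I$-nilpotency of elements of $\sqrt{I}$ with Lemma~\ref{lemma4.15}. The only cosmetic differences are that the paper routes the first inclusion through the intermediate containments $\langle\sqrt{J}\rangle\subseteq\sqrt{\langle J\rangle}\subseteq\sqrt{I}$, and for the second inclusion it invokes directly the implication ``$I$-strongly nilpotent $\Rightarrow$ $I$-nilpotent'' rather than exhibiting the sequence $a_i=f^{2^{i-1}}$.
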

\begin{proof}
	
	$\sqrt{I}\subseteq I(V(I))$: If $V(I)=\emptyset$, then $I(V(I))=A$ and hence there is nothing to prove. Assume that $V(I)\neq \emptyset$ and let $f\in \sqrt{I}$, then $f$ is $I$-strongly nilpotent, and hence, $I$-nilpotent, so there exists $m\geq 1$ such that $f^m\in I$. Let $Z:=(z_1,\dots,z_n)\in V(I)$, then $f^m\in \langle Z\rangle$. From Lemma \ref{lemma4.15}, $f\in \langle Z\rangle$, i.e., $f\in I(V(I))$. 
	
	$\langle I_{Z(A)}(V_{Z(A)}(J))\rangle\subseteq \sqrt{I}$: Consider the inclusion $Z(A)\xrightarrow{l} A$ and let $J:=l^{-1}(I)$. Then, $J=l(J)=l(l^{-1}(I))\subseteq I$, and let $\langle J\rangle:=AJA$ be the two-sided ideal of $A$ generated by $J$. We have $\langle J\rangle\subseteq I$, so $\sqrt{\langle J\rangle}\subseteq \sqrt{I}$, but $\langle \sqrt{J}\rangle\subseteq \sqrt{\langle J\rangle}$, where $\sqrt{J}$ is the radical of $J$ in the ring $Z(A)$. In fact, let $w\in \sqrt{J}$, then there exists $m\geq 1$ such that $w^m\in J\subseteq \langle J\rangle$, but $w\in Z(A)$, then $w$ is $\langle J\rangle$-strongly nilpotent, i.e., $w\in \sqrt{\langle J\rangle}$. Thus, $\langle \sqrt{J}\rangle\subseteq \sqrt{I}$. Applying the classical Hilbert's Nullstellensatz for $Z(A)$ (here we use that $\mathbb{F}$ is algeraically closed) we have $\sqrt{J}=I_{Z(A)}(V_{Z(A)}(J))$, so we get that 
	$\langle I_{Z(A)}(V_{Z(A)}(J))\rangle\subseteq \sqrt{I}$. 
\end{proof}

\begin{example}\label{example3.13}
	Next we present some concrete examples of skew $PBW$ extensions that satisfy the hypothesis of Theorem \ref{Nullstellensatz}. $\mathbb{F}$ denotes an algebraically closed field.
	
	(i) It is clear that if $A=\mathbb{F}[x_1,\dots,x_n]$ and $I$ is an ideal of $A$, then in Theorem \ref{Nullstellensatz} we have $\langle I_{Z(A)}(V_{Z(A)}(J))\rangle=I(V(I))$, and hence, $I(V(I))=\sqrt{I}$. 
	
	(ii) If $q\neq 1$ is an arbitrary root of unity of degree $m\geq 2$, then the center of the quantum
	plane $A:=\mathbb{F}_{q}[x,y]$ is the subalgebra generated by $x^m$ and $y^m$, i.e.,
	$Z(\mathbb{F}_{q}[x,y])=\mathbb{F}[x^m,y^m]$ (see \cite{Shirikov} or also \cite{Lezama-sigmaPBW}, Proposition 3.3.14). Recall that the rule of multiplication in $A$ is given by $yx=qxy$.
	
	(iii) The previous example can be generalized in the following way (see \cite{Zhangetal}, Lemma 4.1, or also \cite{Lezama-sigmaPBW}, Proposition 3.3.15): Let $q\in \mathbb{F}-\{0\}$ and $A:=\mathbb{F}_q[x_1,\dots,x_n]$ be the skew $PBW$ extension
	defined by $x_jx_i=qx_ix_j$ for all $1\leq i<j\leq n$. If $n\geq 2$ and $q\neq 1$ is a root of
	unity of degree $m\geq 2$, then
	\begin{enumerate}
		\item[\rm (a)]If $q=-1$, then
		\begin{center}
			$Z(A)=\mathbb{F}[x_1^2,\dots,x_n^2]$ when $n$ is even.
			
		\end{center}
		\item[\rm (b)]If $q\neq -1$, then
		\begin{center}
			$Z(A)=\mathbb{F}[x_1^m,\dots,x_n^m]$ when $n$ is even.
			
		\end{center}
	\end{enumerate}
	
	(iv) Consider that for every $1\leq i,j\leq n$, $q_{ij}\in \mathbb{F}-\{0\}$ is a nontrivial
	root of unity of degree $d_{ij}<\infty$ and let $A:=\mathbb{F}_{\boldsymbol{\rm q}}[x_1,\dots,x_n]$ be the skew $PBW$ extension
	defined by $x_jx_i=q_{ij}x_ix_j$ for all $1\leq i<j\leq n$. Let $k_{ij}\in \mathbb{Z}$ such that $|k_{ij}|<d_{ij}$,
	${\rm lcd}(k_{ij}, d_{ij})=1$ and $q_{ij}=\exp(2\pi\sqrt{-1}\frac{k_{ij}}{d_{ij}})$ $($choosing
	$k_{ji}:=-k_{ij}$$)$. Let $L_i:={\rm lcm}\{d_{ij}|j=1,\dots,n\}$. Then 
	$Z(\mathbb{F}_{\boldsymbol{\rm q}}[x_1,\dots,x_n])$ is a polynomial ring if and only if it is of the form
	$\mathbb{F}[x_1^{L_1},\dots,x_n^{L_n}]$ (see \cite{Zhangetal3}, Theorem 0.3 or also \cite{Lezama-sigmaPBW}, Proposition 3.3.17).
\end{example}

We conclude the paper with the following problem induced by Proposition \ref{proposition4.22}.

\begin{problem}
Describe all normal elements of an arbitrary skew $PBW$ extension.
\end{problem}




\begin{thebibliography}{200}

\bibitem{Birkenmeier}\textbf{Birkenmeier, G.F., Kim, J. Y., and Park, J. K.}, \textit{A connection between weak regularity and the simplicity of prime factor rings}, Proc. Amer. Math. Soc., 122, 1994, 53-58.  



\bibitem{Zhangetal3}\textbf{Chan, K., Young, A., and Zhang, J.J.}, \textit{The discriminant formulas and applications}, Algebra Number
Theory, 10, 2016, 557--596. 

\bibitem{Zhangetal}\textbf{Ceken, S., Palmieri, J., Wang, Y.-H., and Zhang, J.J.}, \textit{The discriminant controls automorphism groups of
	noncommutative algebras}, Adv. Math., 269, 2015, 551--584.


\bibitem{Fajardo2}\textbf{Fajardo, W.}, \textit{Extended modules and skew $PBW$ extensions}, Ph.D. Thesis, Universidad Nacional de
Colombia, Bogotá, 2018.

\bibitem{Fajardo3}\textbf{Fajardo, W.}, \textit{A computational Maple library for skew PBW extensions}, Fund. Inform., 176, 2019, 159--191.

\bibitem{Fulton}\textbf{Fulton, W.}, \textit{Algebraic Curves}, Benjamin, 1969.


\bibitem{Luerssen}\textbf{Gluesing-Luerssen, H.}, \textit{Introduction to skew-polynomial rings and skew-cyclic codes}, arXiv:1902.03516v2.


















\bibitem{LezamaGallego}\textbf{Lezama, O. and Gallego, C.}, {\em Gr\"obner bases for ideals of sigma-PBW extensions}, Communications in Algebra,
39 (1), 2011, 50-75.

\bibitem{lezamareyes1}\textbf{Lezama, O. \& Reyes, M.}, {\em Some homological properties of skew $PBW$
	extensions}, Comm. in Algebra, 42, 2014, 1200-1230.

\bibitem{Lezama-sigmaPBW}\textbf{Lezama, O., Fajardo, W., Gallego, C., Reyes, A., Suárez, H., and Venegas, H.}, 
\textit{Skew PBW Extensions: Ring and module theoretic properties, matrix and Gröbner methods, applications}, Algebra and Applications 28, Springer, 2020.


\bibitem{McConnell}\textbf{McConnell, J. and Robson, J.}, \textit{Noncommutative Noetherian Rings},
Graduate Studies in Mathematics, AMS, 2001.

\bibitem{McConnell2}\textbf{McConnell, J. and Robson, J.,} \textit{The Nullstellensatz and generic flatness}, Perspectives in Ring Theory, 1988, pp. 227-232.

\bibitem{Ore}\textbf{Ore, O.}, \textit{Theory of non-commutative polynomials}. Ann.
Math., 34, 1933, 480-508.

\bibitem{reyes-jason}\textbf{Reyes, A. and Hernández-Mogollón, R.}, \textit{A survey on some characterizations of Hilbert's Nullstellensatz for some non-commutative rings of polynomial type}, Ing. Cienc., 16, 2020, 27-52.

\bibitem{Shirikov}\textbf{Shirikov, E.N.}, \textit{Two-generated graded algebras}, Algebra Discrete Math., 3, 2005, 64--80. 





\end{thebibliography}
\end{document}